\numberwithin{equation}{section}
\newtheorem{theorem}{Theorem}
\newtheorem{corollary}{Corollary}
\newtheorem{lemma}{Lemma}
\newtheorem{proposition}{Proposition}
{\theoremstyle{definition}\newtheorem{definition}{Definition}}
{\theoremstyle{definition}\newtheorem{remark}{Remark}}
\newcommand{\dd}[2]
 {\left.\frac{\mathrm{d}}{\mathrm{d}#1}\right|_{#1=#2}}
\newcommand{\dee}{\mathrm{d}}
\newcommand{\R}{\mathbb{R}}
\newcommand{\Arccos}{\mathrm{Arccos}}
\newcommand{\tr}{\mathrm{tr}}
\newcommand{\Hol}{\mathrm{Hol}}
\newcommand{\M}{\mathcal{M}}
\newcommand{\A}{\mathcal{A}_\mathrm{flat}}
\newcommand{\G}{\mathcal{G}}
\newcommand{\g}{\mathfrak{g}}
\newcommand{\ol}[1]{\overline{#1}}
\newcommand{\Ad}[1]{\mathrm{Ad}_{#1}}
\newcommand{\LLL}{\ell}
\newcommand{\cover}{\iota}
\newcommand{\COVER}{I}
\newcommand{\deck}{\tau}
\newcommand{\DECK}{T}
\newcommand{\AAA}{\mathbb{A}}
\begin{document}

\title{Goldman flows on a nonorientable surface}
\author{David B. Klein}
\date{October 28, 2007}
\begin{abstract}
Given an embedded cylinder in an arbitrary surface, we give a gauge
theoretic definition of the associated Goldman flow, which is a circle
action on a dense open subset of the moduli space of equivalence
classes of flat $SU(2)$-connections over the surface.  A cylinder in a
compact nonorientable surface lifts to two cylinders in the orientable
double cover, and the \emph{composite flow} is the composition of one
of the associated flows with the inverse flow of the other.  Providing
explicit descriptions, we relate the flow on the moduli space of the
nonorientable surface with the composite flow on the moduli space of
the double cover.  We prove that the composite flow preserves a
certain Lagrangian submanifold.
\end{abstract}
\maketitle

\section{Introduction}

We generalize the Goldman flow of L.~Jeffrey and J.~Weitsman to the
moduli space of an arbitrary (possible nonorientable) surface: given an
embedded oriented cylinder in the surface, we define an associated
circle action on a dense open subset of the $SU(2)$ moduli space,
which coincides with action of L.~Jeffrey and J.~Weitsman when the
surface is compact and oriented.  Here, if $G$ is a Lie group then the
moduli space of a surface $S$ is the quotient $\M(S)=\A(S)/\G(S)$ of
the space of flat connections on the trivial principal $G$-bundle by the
group of gauge transformations.

We restrict our attention to a compact nonorientable surface.  An
embedded cylinder lifts to two disjoint cylinders in the orientable
double cover, and the two associated Goldman flows commute;
composing one of these flows with the inverse flow of the other
produces a circle action on the moduli space of the double cover,
which we shall call the \emph{composite flow}.  Using convenient
generators of the fundamental group of the nonorientable surface and
their preimages in the double cover, we give explicit descriptions both
of the Goldman flow on the moduli space of the surface, and of the
composite flow on the moduli space of the double cover.  The pullback
of the deck transformation induces an involution on the moduli space
of the double cover, and the fixed point set of this involution has been
shown by N.-K.~Ho to be a Lagrangian submanifold.  We prove that
the composite flow preserves this Lagrangian submanifold.  The
pullback of the covering map induces a map from the moduli space of
the surface to the moduli space of the double cover, and we prove that
the image of this map is also preserved by the composite flow.

This paper was inspired by the work of W.~Goldman.  The moduli
space of a surface $S$ may be identified with the space
$\mathrm{Hom}(\pi_1(S),G)/G$ of conjugacy classes of
homomorphisms from the fundamental group into the Lie group.  In
\cite{Goldman86}, starting with a simple closed curve in a compact
Riemann surface $S$ and a conjugation invariant function on a rather
general Lie group $G$, W.~Goldman defines an associated
$\R$-action on $\mathrm{Hom}(\pi_1(S),G)/G$.  After using the
invariant function and the simple closed curve to produce a function on
the symplectic space $\mathrm{Hom}(\pi_1(S),G)/G$, this $\R$-action
is the flow of the associated Hamiltonian vector field.  The typical
example of an invariant function on a Lie group is the trace function,
$\tr(g)$.  The Goldman flow for an arbitrary invariant function is
periodic when restricted to any one orbit, but the periods generally
differ from orbit to orbit.  In \cite{JeffreyWeitsman}, working with a
Riemann surface and the group $G=SU(2)$, L.~Jeffrey and
J.~Weitsman consider the invariant function $\Arccos((\tr g)/2)$.
Although the associated Goldman flow is defined only on a dense
open subset of the moduli space, it has \emph{single} period for each
orbit and thus defines a circle action on its domain.  This circle action
has been studied by various other authors; see, for instance,
\cite{Donaldson} or \cite{Tyurin}.

\subsection{Outline of the paper}

In Section~\ref{section1} we use the language of gauge theory to
define the Goldman flow associated to an embedded oriented cylinder
in an arbitrary surface; it is a circle action on an open dense subset of
the $SU(2)$ moduli space of the surface.  Specifically, we define two
$\R$-actions on the space of flat connections, one corresponding to
the left half of the cylinder and one corresponding to the right, both of
which cover the Goldman flow on the moduli space.  The key step in
defining these two $\R$-actions is to use Lemma 2.3 from
\cite{JeffreyWeitsman}, which says that a flat connection on the
surface can  be \emph{adapted} to the cylinder.  When the surface is
compact and oriented, our circle action coincides with the circle action
of L.~Jeffrey and J.~Weitsman given in \cite{JeffreyWeitsman}.

In Section~\ref{section2} we consider a compact nonorientable surface
with an embedded cylinder.  In this paper we assume that the cylinder
does not separate the surface into two pieces.  With minor
modifications, the technique used may treat the case where the
cylinder divides the surface in two.

In Sections~\ref{topology_surface} and~\ref{topology_cover} we
explore the topology of the nonorientable surface and its oriented
double cover.  We choose convenient generators of the fundamental
group of the surface, and view the surface as a polygon with edge
identifications.  Lifting these generators to the double cover, we view
the double cover as \emph{two} polygons with edge identifications.
The interior of each of the polygons representing the double cover is
mapped diffeomorphically by the covering map onto the interior of the
polygon representing the surface.

Section~\ref{identifications} is devoted to identifying the moduli spaces
of the surface and of the double cover with spaces that are much
easier to work with.  A well known construction allows us to identify the
moduli space of flat connections modulo \emph{based} gauge
transformations with a subset $\mathcal{R}$ of the direct sum of the
same number of copies of $G$ as there are generators; the set
$\mathcal{R}$ may also be identified with the space
$\mathrm{Hom}(\pi,G)$ of homomorphisms from the fundamental
group of the surface to $G$.  The Lie group acts on $\mathcal{R}$ by
conjugation on each factor, and the quotient is identified with the
moduli space of the nonorientable surface.  The preimage under the
covering map of the base point of the fundamental group is two points,
and lifting the generators of the fundamental group produces twice as
many curve in the double cover as there are generators; some of
these lifted curves are loops, and some are paths from one preimage
of the base point to the other.  Utilizing a construction of N.-K.~Ho that
appears in \cite{Ho}, we use the lifts of the generators to define a
subset $\widetilde{\mathcal{R}}$ of the direct sum of twice as many
copies of $G$ as there are generators, and we equip
$\widetilde{\mathcal{R}}$ with an action of $G\times G$ so that the
quotient is identified with the moduli space of the double cover.

The deck transformation of the double cover induces an involution on
the moduli space of the double cover, and the fixed point set of this
involution is shown in \cite{Ho} to be a Lagrangian submanifold.  In
Section~\ref{fixed_point} we adapt a lemma from \cite{Ho} that
expresses this fixed point set as a union of more manageable sets.

W.~Goldman defines the flow on $\mathrm{Hom}(\pi_1(S),G)/G$ for a
compact oriented surface $S$ as the projection of a certain flow on
$\mathrm{Hom}(\pi_1(S),G)$; see \cite{Goldman86},
\cite{Goldman97}, and \cite{Goldman04}.  Analogously, in
Section~\ref{goldman_flow} we give an explicit definition the Goldman
flow on the moduli space $\mathcal{R}/G$ of the nonorientable surface
by describing a lift of the flow to $\mathcal{R}$.  The cylinder in the
nonorientable surface lifts to two disjoint cylinders in the double cover,
and in Section~\ref{goldman_flow} we give explicit descriptions of the
two associated Goldman flows on the moduli space
$\widetilde{\mathcal{R}}/(G\times G)$ by defining two lifted flows on
$\widetilde{\mathcal{R}}$.  Composing one of these flows with the
inverse of the other gives the \emph{composite flow}.  Note that when
dealing with the composite flow on the moduli space of the double
cover, we can't use the analogy with homomorphisms from
fundamental group to $G$ because the base point in the nonorientable
surface forces us to consider \emph{two} base points in the double
cover.  The gauge theoretic definition of the Goldman flow developed
in Section~\ref{section1} easily handles the two base points.

In Section~\ref{proof} we prove that the composite flow on the moduli
space of the double cover preserves both the fixed point set of the
involution induced by the deck transformation, and the image of the
map between moduli spaces induced by the covering map.

\section{The $SU(2)$ Goldman flow}\label{section1}

\subsection{Notation and conventions}

Suppose $\Sigma$ is a real 2-dimensional smooth manifold.  The
surface $\Sigma$ may be nonorientable or noncompact.  Let
$\A(\Sigma)\subset\Omega^1(\Sigma)\otimes\mathfrak{su}(2)$ be the
space of flat connections on the trivial principal $SU(2)$-bundle over
$\Sigma$.  Our convention for principal bundles is to use the left action
induced by left multiplication in the Lie group; thus, for instance, the
curvature of a connection $A$ is
$\dee A-{\textstyle\frac{1}{2}}[A\wedge A]$.  Here's a useful formula: if
$\sigma\in C^\infty([0,1],\Sigma)$ is a curve and if $A$ is a connection
that takes values in an abelian Lie subalgebra of $\mathfrak{su}(2)$
when restricted to $\sigma$ then
\begin{equation}
\Hol_\sigma(A)=\exp\left(-{\textstyle \int_0^1}\sigma^*A\right).
\end{equation}
\noindent The gauge group is $\G(\Sigma)=C^\infty(\Sigma,SU(2))$,
and the moduli space is $\M(\Sigma)=\A(\Sigma)/\G(\Sigma)$.  Let
$G=SU(2)$ and let $\g=\mathfrak{su}(2)$.

\subsection{The cylinder}

Let $U=S^1\times[-1,1]\subset\Sigma$ be an embedded cylinder with
coordinates $(\theta,s)$ and orientation $\dee\theta\wedge\dee s$;
here, we are viewing the circle as $\R/2\pi\mathbb{Z}$.  If $\Sigma$ is
oriented then we assume that the embedding is orientation preserving.
Let $\gamma$ be the central curve $S^1\times\{0\}$ with orientation
$\dee\theta$ and base point $p=(0,0)$; see Figure~\ref{x1_cylinder}.

{\begin{figure}[h] 
\psfrag{p}{$p$}
\psfrag{c}{$\gamma$}
\psfrag{U}{$U$}
\psfrag{s}{$s$}
\psfrag{dt}{$\frac{\partial}{\partial\theta}$}
\psfrag{ds}{$\frac{\partial}{\partial s}$}
\psfrag{dtds}{$\text{orient}^{\underline{\text{n}}} \
 \dee\theta\wedge\dee s$}
\includegraphics{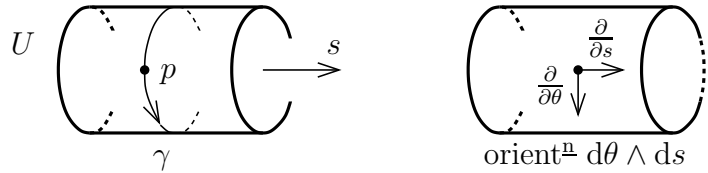}
\caption{\label{x1_cylinder}
The cylinder $U$, the curve $\gamma$, and the point $p$.}
\end{figure}}

\subsection{The function}

Let $G=SU(2)$ and consider the $\Ad{G}$-invariant inner product
$\langle\zeta,\eta\rangle={\textstyle -\frac{1}{2}}\tr(\zeta\eta)$ on the
Lie algebra $\g=\mathfrak{su}(2)$.  Let $f$ be the smooth $\R$-valued
conjugation invariant function on $G\smallsetminus\{\pm 1\}$ defined
as follows:
\begin{equation}
f(g)=\Arccos({\textstyle \frac{1}{2}}\tr(g)).
\end{equation}
\noindent Define a \emph{logarithm} map $\LLL$ on
$G\smallsetminus\{\pm 1\}$ by requiring that $\exp(\LLL(g))=g$ and
$\langle\LLL(g),\LLL(g)\rangle<\pi^2$.  The \emph{variation} of $f$ with
respect to $\langle\cdot,\cdot\rangle$ is the $\g$-valued function $F$
on $G\smallsetminus\{\pm 1\}$ defined by setting
$\langle F(g),\xi\rangle=\dee f_g(\xi^\mathrm{L})
=\dee f_g(\xi^\mathrm{R})$, for all $\xi\in\g$.  Explicitly, if
$g\in G\smallsetminus\{\pm 1\}$, let $\alpha=f(g)\in(0,\pi)$ and write
$g=x\left(\begin{smallmatrix}e^{i\alpha}&\\&e^{-i\alpha}
\end{smallmatrix}\right)x^{-1}$ for some $x\in G$; then
\begin{equation}\label{logarithm}
\LLL(g)=x\left(\begin{smallmatrix}i\alpha&\\&-i\alpha
\end{smallmatrix}\right)x^{-1},
\end{equation}
\noindent and
\begin{equation}\label{variation}
F(g)={\textstyle \frac{1}{\sqrt{4-(\tr g)^2}}}(g-g^{-1})
={\textstyle \frac{1}{\sqrt{\langle\LLL(g),\LLL(g)\rangle}}}\LLL(g)
=x\left(\begin{smallmatrix}i&\\&-i\end{smallmatrix}\right)x^{-1}.
\end{equation}
\noindent We shall eventually need the following fact:
if $g\in G\smallsetminus\{\pm 1\}$, $x\in G$, and $xgx^{-1}=g$ then
\begin{equation}\label{variation_commute}
\Ad{x}F(g)=F(g).
\end{equation}

\subsection{The Goldman flow}

Let $\mathcal{S}_\gamma=\{A\in\A(\Sigma) \ : \
\Hol_\gamma A\neq\pm 1\}$, and let $\M_\gamma
=\mathcal{S}_\gamma/\G(\Sigma)\subset\M(\Sigma)$.  Use the curve
$\gamma$ to define an $\R$-valued function $f_\gamma$ on
$\mathcal{S}_\gamma$,
\begin{equation}
f_\gamma(A)=f(\Hol_\gamma A).
\end{equation}
\noindent The Goldman flow associated to the cylinder $U$, which we
define in the proof of the following theorem, is a periodic $\R$-action
$\{\Xi_t\}_{t\in\R}$ on the dense open subset $\M_\gamma$ of the
moduli space.

\begin{theorem}\label{theorem_flow}
There are $\R$-actions $\{\Xi_t^+\}_{t\in\R}$ and $\{\Xi_t^-\}_{t\in\R}$
on $\mathcal{S}_\gamma$ satisfying the following conditions:
\begin{itemize}
\item[(i)] The $\R$-actions $\Xi_t^\pm$ have ``support'' in $U$ in the
following sense: $\Xi_t^-(A)=A$ outside of some compact subset of
$S^1\times(-1,0)\subset U$, and $\Xi_t^+(A)=A$ outside of some
compact subset of $S^1\times(0,1)\subset U$.
\item[(ii)] If $\dee(f_\gamma)_A$ is the tangent map of $f_\gamma$ at
$A$ then
\begin{equation}
\dee(f_\gamma)_A(B)=\int_U
\left\langle({\textstyle \dd{t}{0}}\Xi_t^\pm A)\wedge B\right\rangle,
\end{equation}
\noindent for $B\in T_A\mathcal{S}_\gamma=T_A\A(\Sigma)$.
\item[(iii)] $\Xi_t^\pm$ are $\G$ equivariant: if $A\in\A$ and $\psi\in\G$
then $\Xi_t^\pm(\psi.A)=\psi.(\Xi_t^\pm(A))$.
\item[(iv)] If $A\in\mathcal{S}_\gamma$ and $t\in\R$ then there exists
$\psi\in\G$ such that $\psi.\Xi_t^-(A)=\Xi_t^+(A)$.
\end{itemize}
The $\R$-actions $\Xi_t^+$ and $\Xi_t^-$ on $\mathcal{S}_\gamma$
thus define a common $\R$-action $\{\Xi_t\}_{t\in\R}$ on
$\M_\gamma$.  The action $\Xi_t$ is periodic, with period $\pi$ if
$\Sigma\smallsetminus\gamma$ is disconnected and period $2\pi$ if
$\Sigma\smallsetminus\gamma$ is connected.  The $S^1$-action on
$\M_\gamma$ defined by $\Xi_t$ is called the Goldman flow
associated to the cylinder $U$.
\end{theorem}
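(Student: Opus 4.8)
The plan is to construct the two $\R$-actions $\Xi_t^\pm$ explicitly by hand, exploiting the structure of the cylinder $U$, and then to verify properties (i)--(iv) by direct computation. The starting point is Lemma 2.3 of \cite{JeffreyWeitsman}: given $A\in\mathcal{S}_\gamma$, there is a gauge transformation bringing $A$ into a normal form on $U$ in which $A$ restricted to $U$ takes values in the abelian subalgebra spanned by $\left(\begin{smallmatrix}i&\\&-i\end{smallmatrix}\right)$, and in which the connection is ``adapted'' to the cylinder --- say $A|_U = a(\theta,s)\,\dee\theta$ for a $\g$-valued function $a$ commuting with itself, with $\int_0^{2\pi} a(\theta,0)\,\dee\theta$ prescribed by $\Hol_\gamma A$. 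I would first fix a smooth bump function $\rho^+\colon[-1,1]\to[0,1]$ that is $0$ near $s=0$ and $1$ near $s=1$, supported in $(0,1)$ in the sense that $\rho^+$ is locally constant near the endpoints, and symmetrically $\rho^-$ supported in $(-1,0)$ with $\rho^-=1$ near $s=-1$. The action $\Xi_t^+$ is then defined, in the adapted gauge, by adding to $A|_U$ the term $t\cdot(\dee\rho^+)\otimes F(\Hol_\gamma A)$ (a $1$-form in $\dee s$ times a fixed abelian Lie-algebra element), and leaving $A$ unchanged outside $U$; since $\dee\rho^+$ is compactly supported in $S^1\times(0,1)$ this gives property (i), and since we are adding a closed $\g$-valued $1$-form valued in the abelian subalgebra the flatness is preserved, so $\Xi_t^+(A)\in\A(\Sigma)$; moreover $\Hol_\gamma$ is unchanged so $\Xi_t^+(A)\in\mathcal{S}_\gamma$. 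One checks $\Xi_{t+t'}^+=\Xi_t^+\circ\Xi_{t'}^+$ directly from the additive form of the perturbation, using \eqref{variation_commute} to see that $F(\Hol_\gamma(\Xi_{t'}^+A))=F(\Hol_\gamma A)$.

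Property (ii) is the Hamiltonian identity and should follow from a direct computation: $\frac{\dee}{\dee t}\big|_{t=0}\Xi_t^+A = (\dee\rho^+)\otimes F(\Hol_\gamma A)$, so the right-hand side of (ii) is $\int_U \langle (\dee\rho^+)F(\Hol_\gamma A)\wedge B\rangle$. Integrating first in the $s$ direction across the cylinder, $\int_{-1}^1 \dee\rho^+ = 1$, and what remains is $\int_\gamma \langle F(\Hol_\gamma A), (\text{the }\dee\theta\text{-component of }B)\rangle$, which by the chain rule and the defining property $\langle F(g),\xi\rangle=\dee f_g(\xi^{\mathrm L})$ of the variation $F$ equals $\dee(f\circ\Hol_\gamma)_A(B)=\dee(f_\gamma)_A(B)$. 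The one subtlety is that this computation is carried out in the adapted gauge while $B$ is an arbitrary tangent vector; one reconciles this using that both sides of (ii) are gauge-invariant in the appropriate sense, which is exactly what property (iii) records. Property (iii) is essentially built in: a gauge transformation $\psi$ conjugates the adapted-gauge data, and since the perturbation term is conjugated along with it --- here \eqref{variation_commute} is again used, to handle the stabilizer ambiguity in the choice of $x$ diagonalizing $\Hol_\gamma A$ --- the construction commutes with the $\G$-action; care must be taken that the construction is independent of the choice of adapting gauge transformation, again reducing to \eqref{variation_commute}.

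Property (iv) is the crux and I expect it to be the main obstacle. Both $\Xi_t^+(A)$ and $\Xi_t^-(A)$ agree with $A$ outside $U$ and differ from $A$ inside $U$ only by closed abelian-valued $1$-forms supported on the two halves; the total ``twist'' added, $\int_{-1}^1 \dee\rho^\pm = 1$ in both cases, is the same, so $\Xi_t^+(A)$ and $\Xi_t^-(A)$ have the same holonomy around every loop, hence should be gauge-equivalent. Concretely, $\Xi_t^+(A)$ and $\Xi_t^-(A)$ differ by $t\cdot\dee(\rho^+-\rho^-)\otimes F$; since $\rho^+-\rho^-$ is a function on the cylinder equal to constants near the two ends, $\dee(\rho^+-\rho^-)$ is exact with a primitive that is globally defined near $\partial U$, and one builds $\psi$ on $U$ of the form $\exp(t\,\beta(s)\,F)$ for a suitable function $\beta$ interpolating the two bump functions, extended by a constant in $G$ over the rest of $\Sigma$; the gauge-transformed connection then matches. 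The point requiring care --- and where connectedness of $\Sigma\smallsetminus\gamma$ will enter, for the period count in the final claim --- is that whether such a $\psi$ closes up to a well-defined element of $\G(\Sigma)=C^\infty(\Sigma,G)$ depends on how the two ends of the cylinder sit in $\Sigma$: if $\Sigma\smallsetminus\gamma$ is disconnected the two boundary circles lie in different components and one can adjust independently, forcing the period to be $\pi$, whereas if it is connected the adjustments are linked and the period is $2\pi$. Once (i)--(iv) are in hand, the descent to a single $\R$-action $\Xi_t$ on $\M_\gamma$ is immediate from (iii) and (iv), and periodicity with the stated period follows by checking when $\Xi_\pi^\pm(A)$ or $\Xi_{2\pi}^\pm(A)$ is gauge-equivalent to $A$, i.e.\ when $\exp$ of the accumulated abelian element returns to the identity, combined with the connectedness dichotomy just described.
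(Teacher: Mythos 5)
Your construction coincides with the paper's: you adapt $A$ to the cylinder via Jeffrey--Weitsman's Lemma 2.3 and add, in that gauge, $t\,\eta_\pm(s)\,\dee s\otimes F(\Hol_\gamma A)$ (your $\dee\rho^\pm$ is the paper's bump form $\eta_\pm\,\dee s$), then verify (i)--(iv) and the period count exactly as the paper leaves to the reader. One small slip: as written, your $\rho^-$ goes from $1$ near $s=-1$ to $0$ near $s=0$, giving $\int\dee\rho^-=-1$, inconsistent with your later claim $\int_{-1}^1\dee\rho^\pm=1$; you want $\rho^-=0$ near $s=-1$ and $\rho^-=1$ near $s=0$, matching the paper's convention that both $\eta_\pm$ have integral $+1$.
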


\begin{proof}
We shall construct the maps $\Xi_t^\pm$, and leave the proofs of the
above statements as an exercise for the reader.  Let
$\eta_-(s),\eta_+(s)\in C^\infty(\R)$ be smooth bump functions with
compact support in either $(-1,0)$ or $(0,1)$ that have integral $1$;
see Figure~\ref{x2_bump_function}.  We use the following variation of
Lemma 2.3 from \cite{JeffreyWeitsman}: if $A\in\mathcal{S}_\gamma$
then $\Hol_\gamma A=\exp(\LLL(\Hol_\gamma A))$, and there is a
unique gauge transformation $u\in\G(U)$ on $U$ with $u(p)=1$ such
that
\begin{equation}\label{adapted_equation}
u.(A|_U)=-\frac{\dee\theta}{2\pi}\otimes\LLL(\Hol_\gamma A).
\end{equation}
\noindent Here, we say that $u.(A|_U)$ is \emph{adapted} to the
cylinder $U$, or that $u$ adapts $A$ to $U$.  For $t\in\R$ define
\begin{equation}
\Xi_t^\pm(A)=A+\underbrace{
\Ad{u^{-1}}(\eta_\pm(s)\dee s\otimes tF(\Hol_\gamma A))}_{\in\A(U)},
\end{equation}
\noindent where the second term
$\Ad{u^{-1}}(\eta_\pm(s)\dee s\otimes tF(\Hol_\gamma A))$ is
supported on a compact subset of $S^1\times(-1,1)\subset U$ and
therefore extends by 0 to a flat connection on $\Sigma$.

To prove~(ii), use the infinitesimal version of
equation~(\ref{adapted_equation}): if $A\in\mathcal{S}_\gamma$ and
$B\in T_A\A(\Sigma)$ then there exists a Lie algebra-valued function
$\varphi\in C^\infty(U)\otimes\g$ on the cylinder such that
\begin{equation}
(B|_U)+\dee_A\varphi=-\frac{\dee\theta}{2\pi}\otimes\zeta,
\end{equation}
\noindent for some $\zeta\in\g$ satisfying
$[\zeta,F(\Hol_\gamma A)]=0$.
\end{proof}

{\begin{figure}[h] 
\psfrag{eta-}{${\scriptstyle \eta_-(s)}$}
\psfrag{eta+}{${\scriptstyle \eta_+(s)}$}
\psfrag{s}{${\scriptstyle s}$}
\psfrag{+1/4}{${\scriptscriptstyle \frac{1}{4}}$}
\psfrag{-1/4}{${\scriptscriptstyle -\frac{1}{4}}$}
\psfrag{+1/2}{${\scriptscriptstyle \frac{1}{2}}$}
\psfrag{-1/2}{${\scriptscriptstyle -\frac{1}{2}}$}
\psfrag{0}{${\scriptstyle 0}$}
\includegraphics{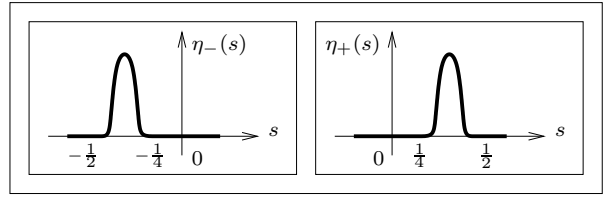}
\caption{\label{x2_bump_function}
The bump functions $\eta_\pm(s)\in C^\infty(\R)$, with area
$\int_{-\infty}^\infty\eta_\pm(s)\dee s=1$.}
\end{figure}}

The function $f_\gamma$ is $\G$-invariant and thus defines a function
on $\M_\gamma$, (which we still call $f_\gamma$).  When $\Sigma$
is oriented and compact, there is a well-known symplectic structure on
$\M(\Sigma)$ given by
\begin{equation}\label{symplectic_form}
\omega_{[A]}([B],[C])=\int_\Sigma\langle B\wedge C\rangle,
\quad \text{for $B,C\in T_A\A$};
\end{equation}
\noindent see, for instance, \cite{AtiyahBott} or \cite{McDuffSalamon}.
The following corollary is an immediate consequence of item~(ii) in
Theorem~\ref{theorem_flow}.

\begin{corollary}
\noindent If $\Sigma$ is oriented and compact then the Goldman flow
${\Xi_t}$ is the flow of the Hamiltonian vector field on $\M_\gamma$
with Hamilton function $f_\gamma$.
\end{corollary}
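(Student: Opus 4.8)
The plan is to verify that the infinitesimal generator of $\Xi_t$ on $\M_\gamma$ coincides with the Hamiltonian vector field of $f_\gamma$; since $\Xi_t$ is a one-parameter group, it is then \emph{by definition} the flow of that vector field. Fix $A\in\mathcal{S}_\gamma$ and a tangent vector $v\in T_{[A]}\M_\gamma$. Because $\Xi_t^+$ is $\G$-equivariant and descends to $\Xi_t$ (items (iii) and (iv) of Theorem~\ref{theorem_flow}), the vector ${\textstyle\dd{t}{0}}\Xi_t^+(A)\in T_A\A(\Sigma)$ is a lift of the generator ${\textstyle\dd{t}{0}}\Xi_t([A])$ of $\Xi_t$; pick also a lift $B\in T_A\A(\Sigma)$ of $v$. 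By item (i) the connection $\Xi_t^+(A)$ agrees with $A$ outside a compact subset of $U$, so ${\textstyle\dd{t}{0}}\Xi_t^+(A)$ is supported there and the integral over $U$ in item (ii) may be replaced by an integral over all of $\Sigma$:
\[
\dee(f_\gamma)_A(B)=\int_\Sigma\left\langle\left(\dd{t}{0}\Xi_t^+(A)\right)\wedge B\right\rangle.
\]

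Next I pass to the quotient. Since $\Sigma$ is compact and oriented, the Atiyah--Bott construction \cite{AtiyahBott} shows that the $\G$-invariant $2$-form $\widetilde\omega_A(B,C)=\int_\Sigma\langle B\wedge C\rangle$ on $\A(\Sigma)$ has, at each $A$, kernel equal to the tangent space of the gauge orbit through $A$, and therefore descends to the symplectic form $\omega$ of~(\ref{symplectic_form}) on the smooth locus of $\M(\Sigma)$, which contains $\M_\gamma$ as an open subset. Consequently $\widetilde\omega_A\big({\textstyle\dd{t}{0}}\Xi_t^+(A),B\big)$ is independent of the choices of lifts and equals $\omega_{[A]}\big({\textstyle\dd{t}{0}}\Xi_t([A]),v\big)$; and since $f_\gamma$ is $\G$-invariant, the left-hand side of the displayed equation equals $(\dee f_\gamma)_{[A]}(v)$. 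Hence
\[
\omega_{[A]}\!\left(\dd{t}{0}\Xi_t([A]),\,v\right)=(\dee f_\gamma)_{[A]}(v)\qquad\text{for all }v\in T_{[A]}\M_\gamma,
\]
which says precisely that the generator of $\Xi_t$ is the Hamiltonian vector field $X_{f_\gamma}$ (with the convention $\iota_{X_{f_\gamma}}\omega=\dee f_\gamma$).

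Since $\Xi_t$ is a (complete, being a circle action) $\R$-action whose infinitesimal generator is $X_{f_\gamma}$, it is the flow of $X_{f_\gamma}$, which is the assertion of the corollary. The only point that needs any care is the descent to $\M_\gamma$: one must know that $\omega$ is the reduction of $\widetilde\omega$ in the Atiyah--Bott sense, and then observe that every ingredient --- the function $f_\gamma$, the lifted actions $\Xi_t^\pm$, and the form $\widetilde\omega$ --- is compatible with the $\G$-action, so that the pointwise identity coming from item (ii) on $\mathcal{S}_\gamma$ is exactly the pullback along $\mathcal{S}_\gamma\to\M_\gamma$ of the Hamilton equation on $\M_\gamma$. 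Everything else is substitution into item (ii).
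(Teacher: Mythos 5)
Your argument is correct and is essentially the paper's own: the paper states the corollary as an immediate consequence of item~(ii) of Theorem~\ref{theorem_flow} together with the formula~(\ref{symplectic_form}) for $\omega$, and your write-up simply spells out the support argument and the Atiyah--Bott descent to the quotient that the paper leaves implicit. No new idea or different route is involved.
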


\subsection{Holonomy}

We shall eventually choose generators of the fundamental group, and
identify the moduli space with a subset $\mathcal{R}/G$ of
$(G\times\cdots\times G)/G$ by taking holonomies along the
generators.  The following theorem, which describes how the
holonomy along certain types of curves is effected by the Goldman
flow, will allow us to work with the Goldman flow on $\mathcal{R}/G$.

\begin{theorem}\label{theorem_holonomy}
Let $\sigma:[0,1]\rightarrow\Sigma$ be a curve in $\Sigma$, with one
endpoint at $p$ and the other endpoint either at $p$ or in
$\Sigma\smallsetminus U$, and suppose that $\sigma$ does not
otherwise intersect $\gamma$.  If $A\in\mathcal{S}_\gamma$, let
\begin{equation}
\zeta_t=\zeta_t(A)=\exp(tF(\Hol_\gamma A)),
\end{equation}
\noindent where $F$ is defined in~(\ref{variation}); then the holonomy
of $\Xi_t^\pm(A)$ along such a curve $\sigma$ is given in the table
appearing in Figure~\ref{x3_table}.
\end{theorem}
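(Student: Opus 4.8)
The plan is to compute $\Hol_\sigma(\Xi^\pm_t A)$ directly from the definition $\Xi^\pm_t(A) = A + \Ad{u^{-1}}(\eta_\pm(s)\,\dee s \otimes tF(\Hol_\gamma A))$, by breaking the curve $\sigma$ into the portion lying inside the cylinder $U$ and the portion lying outside. Outside $U$ the added term vanishes, so $\Xi^\pm_t(A)$ agrees with $A$ there and the holonomy contribution is unchanged. Thus the entire effect of the flow is concentrated on the sub-arcs of $\sigma$ that cross $U$. Since by hypothesis $\sigma$ meets $\gamma$ only at $p$ (if at all) and is otherwise disjoint from $\gamma = S^1 \times \{0\}$, each such sub-arc either stays in the left half $S^1 \times (-1,0)$, stays in the right half $S^1 \times (0,1)$, or — if an endpoint is $p$ — runs from the boundary of $U$ to $p$ on one side. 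This is exactly the case distinction that should produce the rows of the table in Figure~\ref{x3_table}.

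The key computational step is to evaluate the holonomy of the adapted connection along a sub-arc in $U$. First I would apply the gauge transformation $u$ that adapts $A$ to $U$: since holonomy transforms by conjugation by the values of $u$ at the endpoints, $\Hol_{\sigma \cap U}(\Xi^\pm_t A)$ equals (up to these endpoint conjugations by $u$) the holonomy of $u.(\Xi^\pm_t A)|_U = -\frac{\dee\theta}{2\pi}\otimes \LLL(\Hol_\gamma A) + \eta_\pm(s)\,\dee s \otimes tF(\Hol_\gamma A)$. Now the crucial observation is that $\LLL(\Hol_\gamma A)$ and $F(\Hol_\gamma A)$ commute — indeed both are, by~(\ref{logarithm}) and~(\ref{variation}), of the form $x\,\mathrm{diag}(\cdot,\cdot)\,x^{-1}$ with the \emph{same} $x$ — so this connection takes values in an abelian subalgebra along any curve in $U$. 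Hence formula~(1.1) applies: the holonomy is $\exp$ of minus the integral of the pulled-back connection form. Integrating the $\dee s$ component against $\eta_\pm$ picks up a factor of $t F(\Hol_\gamma A)$ exactly when the sub-arc crosses the support of $\eta_\pm$ (i.e. passes through the relevant half of $U$), contributing the factor $\zeta_t = \exp(tF(\Hol_\gamma A))$ or its inverse depending on orientation, while the $\dee\theta$ component reproduces precisely the holonomy of $A$ itself along that sub-arc.

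Assembling the pieces, one conjugates back by $u$ and concatenates with the unchanged outside portions; because $\zeta_t$ commutes with $\Hol_\gamma A$ (as $F(g)$ commutes with $g$, visible from~(\ref{variation})), and by~(\ref{variation_commute}) $\Ad{u(p)}$ or the relevant boundary values interact predictably with $F(\Hol_\gamma A)$, the various factors of $\zeta_t^{\pm 1}$ can be collected to the positions indicated in the table. The sign $\pm$ and whether one gets $\zeta_t$ or $\zeta_t^{-1}$ is governed by whether $\sigma$ traverses the left or right half of $U$ and in which direction relative to $\dee s$; the choice of $\Xi^+$ versus $\Xi^-$ selects which half contributes. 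I expect the main obstacle to be purely bookkeeping: tracking orientations and the endpoint conjugations by $u$ carefully enough to land on the exact entries of Figure~\ref{x3_table}, rather than any conceptual difficulty — the commutativity of $\LLL(\Hol_\gamma A)$ and $F(\Hol_\gamma A)$ does all the real work and reduces everything to abelian holonomy computations via~(1.1).
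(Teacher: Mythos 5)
Your proposal is correct and reaches the table by a genuinely different (though closely related) route. The paper does not decompose $\sigma$: it constructs a single discontinuous gauge transformation $\psi_t^\pm=u^{-1}\exp(t\Upsilon_\pm(s)\otimes F(\Hol_\gamma A))u$ on $\Sigma\smallsetminus\gamma$, where $\Upsilon_\pm$ is the jump function with $\Upsilon_\pm'=\eta_\pm$ shown in Figure~\ref{x4_jump_function}, observes that $\psi_t^\pm.(A|_{\Sigma\smallsetminus\gamma})=(\Xi_t^\pm A)|_{\Sigma\smallsetminus\gamma}$, and then reads the table directly off the transformation rule $\Hol_\sigma(\psi.A)=(\psi(\sigma_0))(\Hol_\sigma A)(\psi(\sigma_1))^{-1}$ together with the two one-sided limits of $\psi_t^\pm(p_0)$ as $p_0\to p$. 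Packaging the integration of $\eta_\pm$ into $\Upsilon_\pm$ and hence into a global gauge transformation buys a lot: since $\Upsilon_\pm$ vanishes near $\partial U$, $\psi_t^\pm$ extends by $1$ outside $U$, so the sub-arc decomposition and the $u$-conjugations at intermediate $\partial U$-crossings that you must track by hand simply never appear, and everything collapses to the value of $\psi_t^\pm$ on either side of $p$, where $u(p)=1$. Your direct computation works — it rests on exactly the same two algebraic facts, namely that $\LLL(\Hol_\gamma A)$ and $F(\Hol_\gamma A)$ lie in a common maximal torus so formula~(1.1) applies on the adapted connection, and that $u(p)=1$ so the $\zeta_t^{\pm1}$ factors land at the free ends of the holonomy — but you should state explicitly that sub-arcs with both $\partial U$-endpoints contribute nothing (the $\eta_\pm\,\dee s$ integral along them vanishes) and that the remaining $u$-factors telescope against the outside pieces; and the appeal to~(\ref{variation_commute}) near the end is not actually needed here, as the only conjugation that matters is by $u(p)=1$.
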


{\begin{figure}[t] 
\psfrag{Xi-}{$\Hol_\sigma(\Xi_t^-(A))$}
\psfrag{Xi+}{$\Hol_\sigma(\Xi_t^+(A))$}
\psfrag{h}{$\Hol_\sigma A$}
\psfrag{ah}{$(\zeta_t)(\Hol_\sigma A)(\zeta_t^{-1})$}
\psfrag{AH}{$(\zeta_t^{-1})(\Hol_\sigma A)(\zeta_t)$}
\psfrag{ht}{$(\Hol_\sigma A)(\zeta_t)$}
\psfrag{HT}{$(\Hol_\sigma A)(\zeta_t^{-1})$}
\psfrag{th}{$(\zeta_t)(\Hol_\sigma A)$}
\psfrag{TH}{$(\zeta_t^{-1})(\Hol_\sigma A)$}
\psfrag{sigma}{$\sigma$}
\psfrag{p}{$p$}
\psfrag{c}{$\gamma$}
\psfrag{U}{$U$}
\psfrag{s}{$s$}
\psfrag{dt}{${\scriptstyle \frac{\partial}{\partial\theta}}$}
\psfrag{ds}{${\scriptstyle \frac{\partial}{\partial s}}$}
\psfrag{dtds}{$\text{orient}^{\underline{\text{n}}} \
 \dee\theta\wedge\dee s$}
\includegraphics{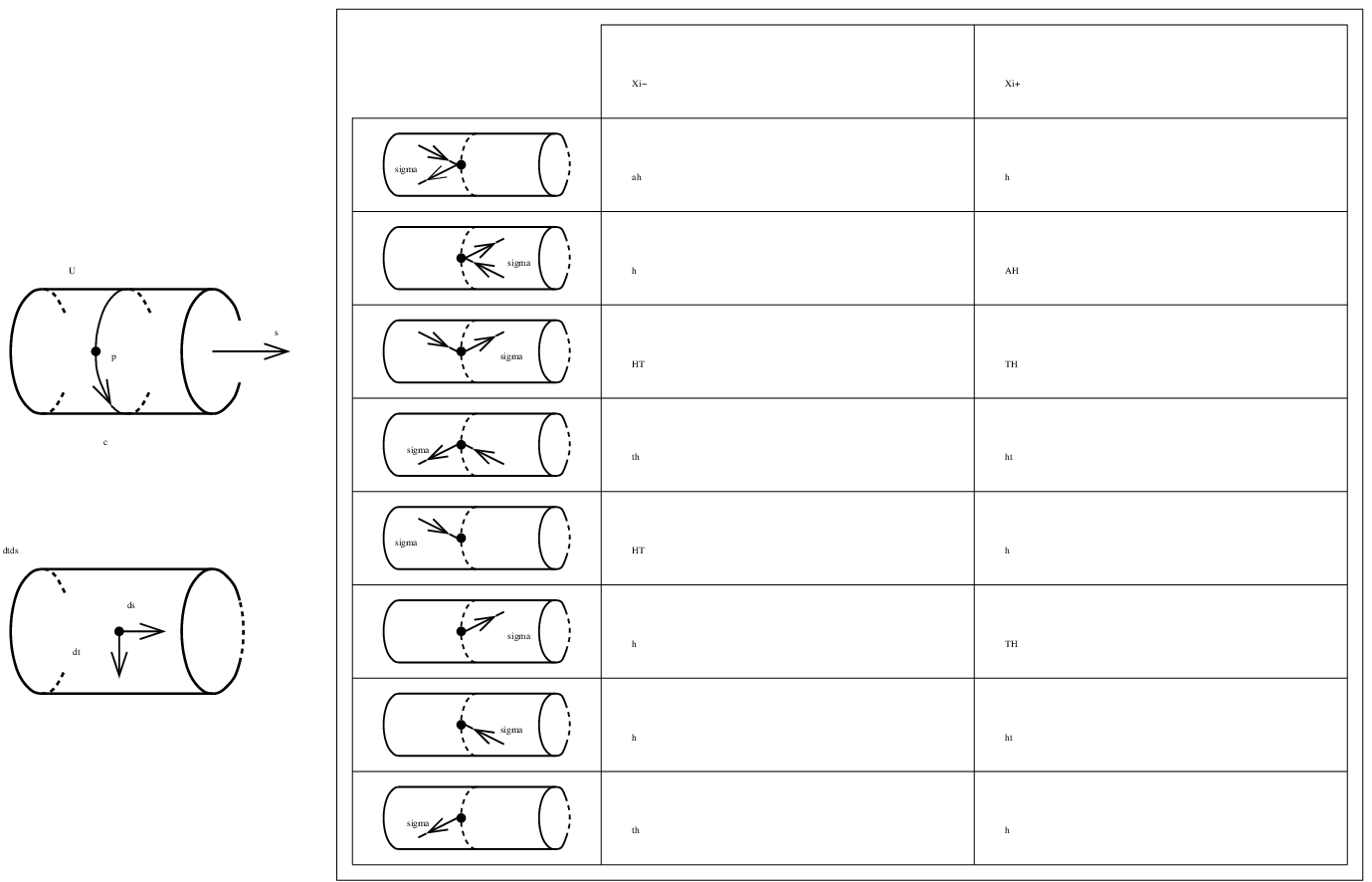}
\caption{\label{x3_table}
The holonomy of $\Xi_t^\pm(A)$ along $\sigma$; see
Theorem~\ref{theorem_holonomy} and
Remark~\ref{remark_holonomy}.}
\end{figure}}

\begin{remark}\label{remark_holonomy}
Since the holonomy of a flat connection along a curve doesn't change
if the curve is deformed by a homotopy that fixes its endpoints, in
Theorem~\ref{theorem_holonomy} we only need to consider how the
curve $\sigma$ behaves near its endpoints relative to the point $p$,
the curve $\gamma$, and the cylinder $U$.  The first column of the
table in Figure~\ref{x3_table} shows the eight possible behaviours.
\end{remark}

{\begin{figure}[b] 
\psfrag{Upsilon-}{${\scriptstyle \Upsilon_-(s)}$}
\psfrag{Upsilon+}{${\scriptstyle \Upsilon_+(s)}$}
\psfrag{s}{${\scriptstyle s}$}
\psfrag{+1/2}{${\scriptscriptstyle \frac{1}{2}}$}
\psfrag{-1/2}{${\scriptscriptstyle -\frac{1}{2}}$}
\psfrag{1}{${\scriptstyle 1}$}
\psfrag{-1}{${\scriptstyle -1}$}
\psfrag{0}{${\scriptstyle 0}$}
\includegraphics{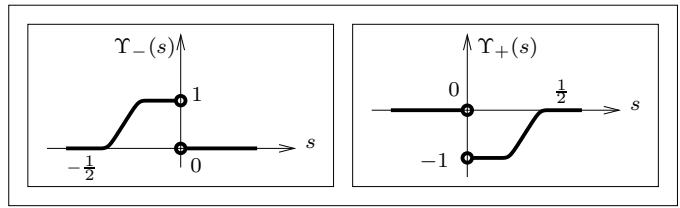}
\caption{\label{x4_jump_function}
The jump functions
$\Upsilon_\pm(s)\in C^\infty(\R\smallsetminus\{0\})$, with
$\frac{\dee}{\dee s}\Upsilon_\pm(s)=\eta_\pm(s)$.}
\end{figure}}

\begin{proof}[Proof of Theorem~\ref{theorem_holonomy}]
Fix $A\in\mathcal{S}_\gamma$, and let $u.(A|_U)$ be adapted to $U$
as in equation~(\ref{adapted_equation}).  Let $\Upsilon_-(s)$ and
$\Upsilon_+(s)$ be the compactly supported jump functions on
$\R\smallsetminus\{0\}$ obtained by integrating the bump functions
$\eta_-(s)$ and $\eta_+(s)$; see Figure~\ref{x4_jump_function}.  The
two gauge transformations
\begin{equation}
\psi_t^\pm=u^{-1}\exp(t\Upsilon_\pm(s)\otimes F(\Hol_\gamma A))u
\end{equation}
\noindent on $U\smallsetminus\gamma$ extend by 1 to gauge
transformations $\psi_t^\pm\in\G(\Sigma\smallsetminus\gamma)$,
which satisfy
\begin{equation}
\psi_t^\pm.(A|_{\Sigma\smallsetminus\gamma})
=(\Xi^\pm_t A)|_{\Sigma\smallsetminus\gamma}.
\end{equation}
\noindent To complete the proof, use the fact that if $\psi$ is a gauge
transformation on $\Sigma$ then the holonomies of $A$ and $\psi.A$
along $\sigma$ are related by
\begin{equation}
\Hol_\sigma(\psi.A)
=(\psi(\sigma_0))(\Hol_\sigma A)(\psi(\sigma_1))^{-1},
\end{equation}
\noindent and consider the limits of $\psi_t^\pm(p_0)$ as $p_0$ tends
to $p$ in $U\smallsetminus\gamma$ from the left and from the right.
\end{proof}

\section{The flow for a nonorientable surface}\label{section2}

For the remainder of this paper, let $\Sigma$ be a compact connected
nonorientable surface with an embedded oriented cylinder $U$.  As in
Figure~\ref{x1_cylinder}, view $U$ as a tubular neighbourhood of the
oriented curve $\gamma$ with base point $p$.

We shall assume that the surface $\Sigma\smallsetminus\gamma$ is
connected; the method used easily adapts to the case where
$\Sigma\smallsetminus\gamma$ is disconnected, and the interested
reader may find this a worthwhile exercise.  Note that $\Sigma$ cannot
be $\R P^2$, because a simple closed curve in $\R P^2$ either is
contractible (and bounds a disk) or else has no orientable tubular
neighbourhood.

{\begin{figure}[h] 
\psfrag{c}{${\scriptstyle \gamma}$}
\psfrag{b}{${\scriptstyle \beta}$}
\psfrag{p}{${\scriptscriptstyle p}$}
\psfrag{U}{$U$}
\psfrag{d}{$\delta$}
\psfrag{subset}{{\LARGE $\subseteq$}}
\psfrag{orientable}{$\Sigma_+$ orientable}
\psfrag{nonorientable}{$\Sigma_+$ nonorientable}
\psfrag{ddd1}{${\scriptstyle \delta \ \sim \
 \beta^{-1}\gamma\beta\gamma^{-1}}$}
\psfrag{ddd2}{${\scriptstyle \delta \ \sim \
 \beta^{-1}\gamma^{-1}\beta\gamma^{-1}}$}
\includegraphics{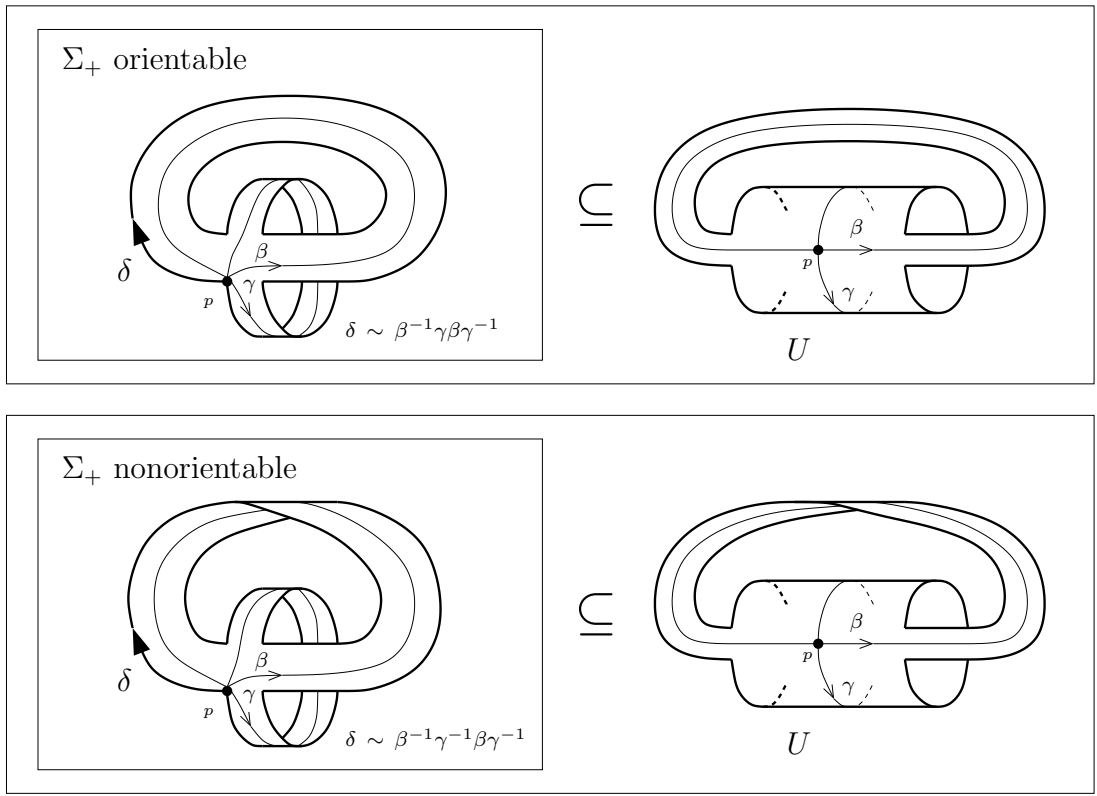}
\caption{\label{x5_surface_plus}
The surface $\Sigma_+$ is either orientable or nonorientable.}
\end{figure}}

\subsection{The topology of $\Sigma$}\label{topology_surface}

Since $\Sigma\smallsetminus\gamma$ is connected, we may choose
a simple closed curve $\beta$ that intersects $\gamma$ exactly once,
transitively at $p$.  Recall that the cylinder $U$ has coordinates
$(s,\theta)$, and orient $\beta$ so that it leaves $p$ in the positive $s$
direction and approaches $p$ from the negative $s$ direction.  Two
things may occur: either $\beta$ possesses an orientable tubular
neighbourhood in $\Sigma$, or it does not.  A sketch of $U$ and a
tubular neighbourhood of $\beta$ appears in the right half of
Figure~\ref{x5_surface_plus}.  Let $\Sigma_+$ be the surface with
boundary pictured in the left half of Figure~\ref{x5_surface_plus}.  The
surface $\Sigma_+$ is either orientable or nonorientable, and in either
case it has a single boundary component, which we call $\delta$.
Viewed as an element of $\pi_1(\Sigma_+,p)$,
\begin{equation}
\delta=\begin{cases}
 \beta^{-1}\gamma\beta\gamma^{-1}
 & \text{for $\Sigma_+$ orientable,} \\
 \beta^{-1}\gamma^{-1}\beta\gamma^{-1}
 & \text{for $\Sigma_+$ nonorientable.} \\ \end{cases}
\end{equation}

Let $\Sigma_-$ be the surface obtained by deleting the interior of
$\Sigma_+$ from $\Sigma$.  The surface $\Sigma_-$ has a single
boundary component, the curve $\delta$, and gluing together
$\Sigma_+$ and $\Sigma_-$ along $\delta$ recovers $\Sigma$.  The
surface $\Sigma_-$ is either nonorientable or orientable; since our
original surface $\Sigma$ is nonorientable, the only restriction is that
$\Sigma_+$ and $\Sigma_-$ cannot both be orientable.  We must thus
consider the following three cases:
\begin{equation}
{\renewcommand{\arraystretch}{1.25}\begin{array}{|rl|}\hline
 \text{case (i):}
 & \text{$\Sigma_-$ nonorientable, $\Sigma_+$ orientable} \\
 \text{case (ii):}
 & \text{$\Sigma_-$ nonorientable, $\Sigma_+$ nonorientable} \\
 \text{case (iii):}
 & \text{$\Sigma_-$ orientable, $\Sigma_+$ nonorientable} \\
 \hline\end{array}}
\end{equation}
\noindent If $\Sigma_-$ is nonorientable then it is diffeomorphic to a
disk with $k>0$ M\"obius strips attached, and we choose generators
$\alpha_1,\dots\alpha_k$ of the rank $k$ free group
$\pi_1(\Sigma_-,p)$ that satisfy
\begin{equation}
\delta=\alpha_1^{\phantom{1}2}\cdots\alpha_k^{\phantom{k}2};
\end{equation}
\noindent this situation is pictured in the bottom half of
Figure~\ref{x6_minus_cover}.  If $\Sigma_-$ is orientable then it is
diffeomorphic to a disk with $k\ge 0$ handles attached, and we
choose generators $\alpha_1,\dots\alpha_{2k}$ of the rank $2k$ free
group $\pi_1(\Sigma_-,p)$ that satisfy
\begin{equation}
\delta=[\alpha_1,\alpha_2]\cdots[\alpha_{2k-1},\alpha_{2k}].
\end{equation}
\noindent The fundamental group of $\Sigma$ is as follows:
\begin{equation}\label{fundamental_group}\begin{aligned}
\text{case (i),}\quad
& \pi_1(\Sigma,p)=
 \left\langle\gamma,\beta,\alpha_1,\dots,\alpha_k \ | \
 \beta^{-1}\gamma\beta\gamma^{-1}=
 \alpha_1^{\phantom{1}2}\cdots\alpha_k^{\phantom{k}2}\right\rangle; \\
\text{case (ii),}\quad
& \pi_1(\Sigma,p)=
 \left\langle\gamma,\beta,\alpha_1,\dots,\alpha_k \ | \
 \beta^{-1}\gamma^{-1}\beta\gamma^{-1}=
 \alpha_1^{\phantom{1}2}\cdots\alpha_k^{\phantom{k}2}\right\rangle; \\
\text{case (iii),}\quad
& \pi_1(\Sigma,p)=
 \left\langle\gamma,\beta,\alpha_1,\dots,\alpha_{2k} \ | \
 \beta^{-1}\gamma^{-1}\beta\gamma^{-1}
 =[\alpha_1,\alpha_2]\cdots[\alpha_{2k-1},\alpha_{2k}]\right\rangle.
\end{aligned}\end{equation}

\begin{remark}\label{generators_remark}
The number of generators appearing in our presentation of
$\pi_1(\Sigma,p)$ can be expressed in terms of the Euler
characteristic of $\Sigma$.  If $\Sigma_-$ is nonorientable then
$\chi(\Sigma_-)=1-k$, and if $\Sigma_-$ is orientable then
$\chi(\Sigma_-)=1-2k$.  In all three cases, $\chi(\Sigma_+)=-1$ and
$\chi(\Sigma)=\chi(\Sigma_+)+\chi(\Sigma_-)$.  Therefore, the number
of generators in equation~(\ref{fundamental_group}) is
\begin{equation}
2-\chi(\Sigma)=\begin{cases}
 2+k, & \text{cases (i) and (ii),} \\
 2+2k, & \text{case (iii).}\end{cases}
\end{equation}
\end{remark}

{\begin{figure}[t] 
\psfrag{c}{${\scriptstyle \gamma}$}
\psfrag{C}{${\scriptstyle \Gamma}$}
\psfrag{0C}{${\scriptstyle \ol{\Gamma}}$}
\psfrag{b}{${\scriptstyle \beta}$}
\psfrag{B}{${\scriptstyle B}$}
\psfrag{0B}{${\scriptstyle \ol{B}}$}
\psfrag{a1}{${\scriptstyle \alpha_1}$}
\psfrag{A1}{${\scriptstyle A_1}$}
\psfrag{0A1}{${\scriptstyle \ol{A}_1}$}
\psfrag{ak}{${\scriptstyle \alpha_k}$}
\psfrag{Ak}{${\scriptstyle A_k}$}
\psfrag{0Ak}{${\scriptstyle \ol{A}_k}$}
\psfrag{p}{${\scriptscriptstyle p}$}
\psfrag{P}{${\scriptscriptstyle P}$}
\psfrag{0P}{${\scriptscriptstyle \ol{P}}$}
\psfrag{d}{$\delta$}
\psfrag{D}{$\Delta$}
\psfrag{0D}{$\ol{\Delta}$}
\psfrag{S-}{{\large $\Sigma_-$}}
\psfrag{0S-}{{\large $\widetilde{\Sigma}_-$}}
\includegraphics{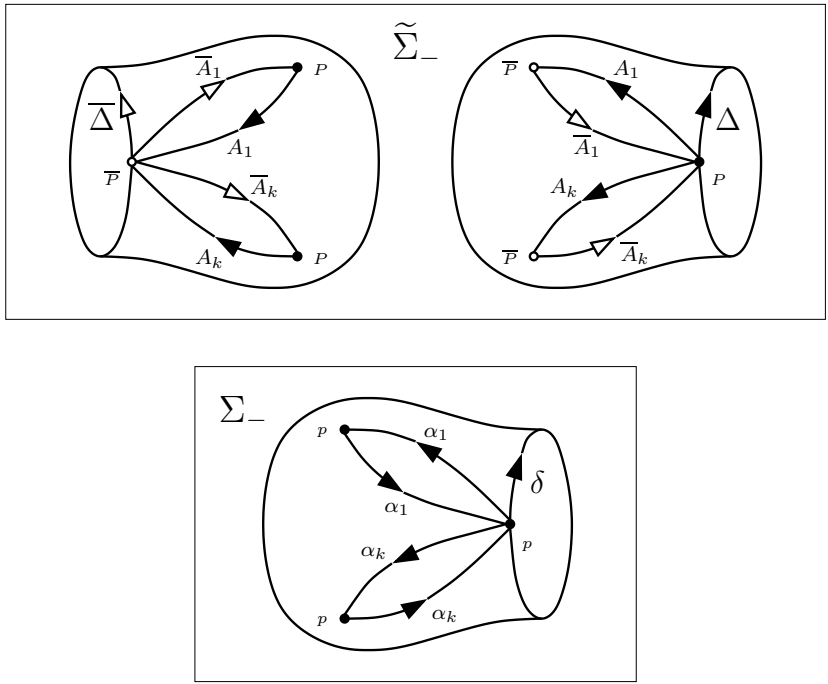}
\caption{\label{x6_minus_cover}
The double cover $\widetilde{\Sigma}_-$ for $\Sigma_-$
nonorientable.}
\end{figure}}

\subsection{The double cover $\widetilde{\Sigma}$}
\label{topology_cover}

Choose an orientation on the orientable double cover
$\widetilde{\Sigma}$ of $\Sigma$.  The preimage in
$\widetilde{\Sigma}$ of the cylinder $U$ under the covering map is two
cylinders $V$ and $\ol{V}$, both of which we orient using the fixed
orientation on $\widetilde{\Sigma}$, that are distinguished by requiring
the covering map to be orientation preserving on $V$ and orientation
reversing on $\ol{V}$.  The base point $p$ lifts to two points $P\in V$
and $\ol{P}\in\ol{V}$, and the oriented curve $\gamma$ lifts to two
oriented curves $\Gamma\subset V$ and $\ol{\Gamma}\subset\ol{V}$.

Let $B$ and $\ol{B}$ be the lifts of $\beta$, and for each $j$ let $A_j$
and $\ol{A}_j$ be the lifts of $\alpha_j$; here, the lifts $B$ and $A_j$
start at $P$, and the lifts $\ol{B}$ and $\ol{A}_j$ start at $\ol{P}$.  If a
loop based at $p$ in $\Sigma$ possesses an orientable tubular
neighbourhood then it lifts to two loops in $\widetilde{\Sigma}$, one
based at $P$ and one based at $\ol{P}$.  If, on the other hand, a loop
in $\Sigma$ possesses no orientable tubular neighbourhood then it
lifts to two paths, one from $P$ to $\ol{P}$ and one from $\ol{P}$ to
$P$.

Let the subsets $\widetilde{\Sigma}_-$ and $\widetilde{\Sigma}_+$ of
$\widetilde{\Sigma}$ be the double covers of $\Sigma_-$ and
$\Sigma_+$, respectively, and let $\Delta$ and $\ol{\Delta}$ be the lifts
of the curve $\delta$ with $P\in\Delta$ and $\ol{P}\in\ol{\Delta}$.  The
boundary of both $\widetilde{\Sigma}_-$ and $\widetilde{\Sigma}_+$ is
the (disjoint) union of $\Delta$ and $\ol{\Delta}$, and
$\widetilde{\Sigma}$ is obtained by gluing together
$\widetilde{\Sigma}_-$ and $\widetilde{\Sigma}_+$ along this common
boundary.

If $\Sigma_+$ is orientable then its double cover
$\widetilde{\Sigma}_+$ is the disjoint union of the following two
components: a copy of $\Sigma_+$ with $\gamma$, $\beta$, $\delta$,
and $p$ relabeled as $\Gamma$, $B$, $\Delta$, and $P$; a copy of
the mirror image of $\Sigma_+$ with $\gamma$, $\beta$, $\delta$, and
$p$ relabeled as $\ol{\Gamma}$, $\ol{B}$, $\ol{\Delta}$, and $\ol{P}$.
When $\Sigma_+$ is nonorientable, its double cover
$\widetilde{\Sigma}_+$ is pictured in Figure~\ref{x7_plus_cover},
where the deck transformation is a $180^\circ$ rotary-reflection (i.e.\ a
$180^\circ$ rotation followed by a reflection that fixes the axis of
rotation).  The case where $\Sigma_-$ is nonorientable is shown in the
bottom half of Figure~\ref{x6_minus_cover}; its double cover
$\widetilde{\Sigma}_-$ appears in the top half of the figure, where the
deck transformation is the obvious reflection.  If $\Sigma_-$ is
orientable then its double cover $\widetilde{\Sigma}_-$ is the disjoint
union of the following two components: a copy of $\Sigma_-$ with
$\delta$ and $p$ relabeled as $\Delta$ and $P$, and with each
$\alpha_j$ relabeled as $A_j$; a copy of the mirror image of
$\Sigma_-$ with $\delta$ and $p$ relabeled as $\ol{\Delta}$ and
$\ol{P}$, and with each $\alpha_j$ relabeled as $\ol{A}_j$.

{\begin{figure}[t] 
\psfrag{C}{${\scriptstyle \Gamma}$}
\psfrag{0C}{${\scriptstyle \ol{\Gamma}}$}
\psfrag{B}{${\scriptstyle B}$}
\psfrag{0B}{${\scriptstyle \ol{B}}$}
\psfrag{P}{${\scriptscriptstyle P}$}
\psfrag{0P}{${\scriptscriptstyle \ol{P}}$}
\psfrag{D}{$\Delta$}
\psfrag{0D}{$\ol{\Delta}$}
\psfrag{DDD}{${\scriptstyle \Delta \ \sim \
 \ol{B}^{-1}\ol{\Gamma}^{-1}\ol{B}\Gamma^{-1}}$}
\psfrag{0DDD}{${\scriptstyle \ol{\Delta} \ \sim \
 B^{-1}\Gamma^{-1}B\ol{\Gamma}^{-1}}$}
\includegraphics{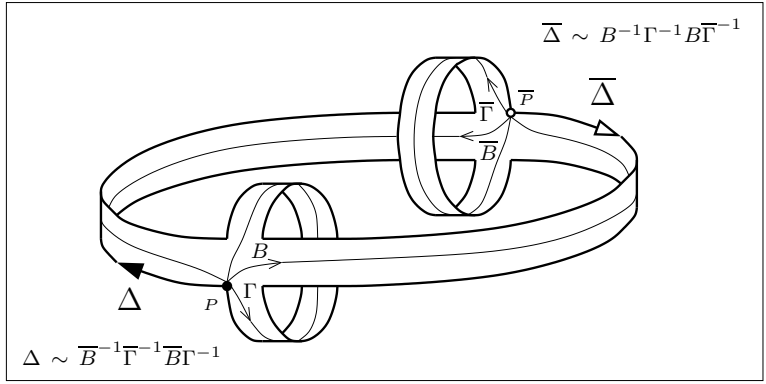}
\caption{\label{x7_plus_cover}
The double cover $\widetilde{\Sigma}_+$ for $\Sigma_+$
nonorientable.}
\end{figure}}

\begin{remark}\label{polygon_remark}
Since the presentation of $\pi_1(\Sigma,p)$ given in
equation~(\ref{fundamental_group}) has a \emph{single} relation,
which results from setting $\delta\in\pi_1(\Sigma_-,p)$ equal to
$\delta\in\pi_1(\Sigma_+,p)$, the surface $\Sigma$ can be realized as
a polygon with the following (clockwise) boundary:
\begin{align*}
\text{case (i),}\quad
& \gamma\beta^{-1}\gamma^{-1}\beta
 \alpha_1^{\phantom{1}2}\cdots\alpha_k^{\phantom{k}2}; \\
\text{case (ii),}\quad
& \gamma\beta^{-1}\gamma\beta
 \alpha_1^{\phantom{1}2}\cdots\alpha_k^{\phantom{k}2}; \\
\text{case (iii),}\quad
& \gamma\beta^{-1}\gamma\beta
 [\alpha_1,\alpha_2]\cdots[\alpha_{2k-1},\alpha_{2k}].
\end{align*}
\noindent We can repeat this process in $\widetilde{\Sigma}$.  View
$\Delta$ as an element of both $\pi_1(\widetilde{\Sigma}_-,P)$ and
$\pi_1(\widetilde{\Sigma}_+,P)$, and and view $\ol{\Delta}$ as an
element of both $\pi_1(\widetilde{\Sigma}_-,\ol{P})$ and
$\pi_1(\widetilde{\Sigma}_+,\ol{P})$.  The surface $\widetilde{\Sigma}$
can be realized as two polygons with the following boundaries:
\begin{equation}\label{polygons}\begin{aligned}
\text{case(i),}\quad
& \begin{cases}
 \text{clockwise}
 & \Gamma B^{-1}\Gamma^{-1}BA_1\ol{A}_1\cdots A_k\ol{A}_k, \\
 \text{counterclockwise}
 & \ol{\Gamma} \, \ol{B}^{-1}\ol{\Gamma}^{-1}\ol{B} \,
  \ol{A}_1 A_1\cdots\ol{A}_k A_k; \end{cases} \\
\text{case(ii),}\quad
& \begin{cases}
 \text{clockwise}
 & \Gamma\ol{B}^{-1}\ol{\Gamma} \, \ol{B}
  A_1\ol{A}_1\cdots A_k\ol{A}_k, \\
 \text{counterclockwise}
 & \ol{\Gamma}B^{-1}\Gamma B
  \ol{A}_1 A_1\cdots\ol{A}_k A_k; \end{cases} \\
\text{case(iii),}\quad
& \begin{cases}
 \text{clockwise}
 & \Gamma\ol{B}^{-1}\ol{\Gamma} \, \ol{B}
  [A_1,A_2]\cdots[A_{2k-1},A_{2k}], \\
 \text{counterclockwise}
 & \ol{\Gamma}B^{-1}\Gamma B
  [\ol{A}_1,\ol{A}_2]\cdots[\ol{A}_{2k-1},\ol{A}_{2k}]. \end{cases}
\end{aligned}\end{equation}
\end{remark}

\subsection{The identifications}\label{identifications}

For the rest of this paragraph let $K=-\chi(\Sigma)$, and recall from
Remark~\ref{generators_remark} that the presentation of
$\pi_1(\Sigma,p)$ appearing in equation~(\ref{fundamental_group})
has $2+K$ generators.  Introduce the following notation:
\begin{align*}
(c,b,\AAA) & =(c,b,a_1,\dots,a_K)\in G^{2-\chi(\Sigma)}, \\
(c,b,\AAA,\ol{c},\ol{b},\ol{\AAA})
& =(c,b,a_1,\dots,a_K,\ol{c},\ol{b},\ol{a}_1,\dots,\ol{a}_K)
 \in G^{2(2-\chi(\Sigma))}.
\end{align*}
\noindent We can left and right multiply as follows: if $x,y\in G$ then,
for instance,
\begin{equation*}
x\AAA y=(xa_1 y,\dots,xa_K y).
\end{equation*}

Consider the following subset $\mathcal{R}$ of $G^{2-\chi(\Sigma)}$:
\begin{equation}\begin{aligned}
\text{case(i),}\quad & \mathcal{R}=\left\{(c,b,\AAA)\in G^{2+k} \ \ | \ \
 b^{-1}cbc^{-1}=
 a_1^{\phantom{1}2}\cdots a_k^{\phantom{k}2}\right\}; \\
\text{case(ii),}\quad & \mathcal{R}=\left\{(c,b,\AAA)\in G^{2+k} \ \ | \ \
 b^{-1}c^{-1}bc^{-1}=
 a_1^{\phantom{1}2}\cdots a_k^{\phantom{k}2}\right\}; \\
\text{case(iii),}\quad & \mathcal{R}=\left\{(c,b,\AAA)\in G^{2+2k} \ \ | \ \
 b^{-1}c^{-1}bc^{-1}=
 [a_1,a_2]\cdots[a_{2k-1},a_{2k}]\right\}.
\end{aligned}\end{equation}
\noindent  Let $G$ act on $\mathcal{R}$, and on $G^{2-\chi(\Sigma)}$,
as follows:
\begin{equation}
g.(c,b,\AAA)=(gcg^{-1},gbg^{-1},g\AAA g^{-1}).
\end{equation}
\noindent Denote by $\G(\Sigma,p)$ the set of gauge transformations
on $\Sigma$ that evaluate to 1 at $p$.  We identify $\mathcal{R}$ with
$\A(\Sigma)/\G(\Sigma,p)$ by taking holonomies around the
generators of $\pi_1(\Sigma,p)$ appearing in
equation~(\ref{fundamental_group}), and we further identify
$\M=\mathcal{R}/G$ with the moduli space $\A(\Sigma)/\G(\Sigma)$:
\begin{equation}\label{identify_1}\begin{aligned}
\A(\Sigma)/\G(\Sigma,p) & \longleftrightarrow\mathcal{R}, \\
\A(\Sigma)/\G(\Sigma) & \longleftrightarrow\M=\mathcal{R}/G.
\end{aligned}\end{equation}
\noindent This identification is quite well known; see, for instance,
\cite{Jeffrey}.  Note that $\mathcal{R}$ may also be identified with the
set of homomorphisms from $\pi_1(\Sigma,p)$ to $G$.

Using a construction from \cite{Ho}, let $G\times G$ act on the
following subset $\widetilde{\mathcal{R}}$ of $G^{2(2-\chi(\Sigma))}$:
case(i),
\begin{equation}\begin{aligned}
& \widetilde{\mathcal{R}}=\left\{\begin{array}{l}
 (c,b,\AAA,\ol{c},\ol{b},\ol{\AAA})\in G^{2(2+k)} \ | \\
 \qquad b^{-1}cbc^{-1}=a_1\ol{a}_1\cdots a_k\ol{a}_k,\\
 \qquad \ol{b}^{-1}\ol{c}\ol{b}\ol{c}^{-1}=\ol{a}_1 a_1\cdots\ol{a}_k a_k
 \end{array}\right\}, \\
& (g,h).(c,b,\AAA,\ol{c},\ol{b},\ol{\AAA})
 =(gcg^{-1},gbg^{-1},g\AAA h^{-1},
 h\ol{c}h^{-1},h\ol{b}h^{-1},h\ol{\AAA}g^{-1});
\end{aligned}\end{equation}
\noindent case(ii),
\begin{equation}\begin{aligned}
& \widetilde{\mathcal{R}}=\left\{\begin{array}{l}
 (c,b,\AAA,\ol{c},\ol{b},\ol{\AAA})\in G^{2(2+k)} \ | \\
 \qquad \ol{b}^{-1}\ol{c}^{-1}\ol{b}c^{-1}
  =a_1\ol{a}_1\cdots a_k\ol{a}_k,\\
 \qquad b^{-1}c^{-1}b\ol{c}^{-1}
  =\ol{a}_1 a_1\cdots\ol{a}_k a_k \end{array}\right\}, \\
& (g,h).(c,b,\AAA,\ol{c},\ol{b},\ol{\AAA})
 =(gcg^{-1},gbh^{-1},g\AAA h^{-1},
 h\ol{c}h^{-1},h\ol{b}g^{-1},h\ol{\AAA}g^{-1});
\end{aligned}\end{equation}
\noindent case(iii),
\begin{equation}\begin{aligned}
& \widetilde{\mathcal{R}}=\left\{\begin{array}{l}
 (c,b,\AAA,\ol{c},\ol{b},\ol{\AAA})\in G^{2(2+2k)} \ | \\
 \qquad \ol{b}^{-1}\ol{c}^{-1}\ol{b}c^{-1}
  =[a_1,a_2]\cdots[a_{2k-1},a_{2k}],\\
 \qquad b^{-1}c^{-1}b\ol{c}^{-1}
  =[\ol{a}_1,\ol{a}_2]\cdots[\ol{a}_{2k-1},\ol{a}_{2k}]
 \end{array}\right\}, \\
& (g,h).(c,b,\AAA,\ol{c},\ol{b},\ol{\AAA})
 =(gcg^{-1},gbh^{-1},g\AAA g^{-1},
 h\ol{c}h^{-1},h\ol{b}g^{-1},h\ol{\AAA}h^{-1}).
\end{aligned}\end{equation}
\noindent Denote by $\G(\widetilde{\Sigma},\{P,\ol{P}\})$ the set of
gauge transformations on $\widetilde{\Sigma}$ that evaluate to 1 at
$P$ and at $\ol{P}$.  Each of the generators of $\pi_1(\Sigma,p)$
appearing in equation~(\ref{fundamental_group}) lifts to two curves in
$\widetilde{\Sigma}$.  By taking holonomies along the lifts of the
generators, we identify $\widetilde{\mathcal{R}}$ with
$\A(\widetilde{\Sigma})/\G(\widetilde{\Sigma},\{P,\ol{P}\})$, and we
further identify $\widetilde{\M}=\widetilde{\mathcal{R}}/(G\times G)$
with the moduli space $\A(\widetilde{\Sigma})/\G(\widetilde{\Sigma})$:
\begin{equation}\label{identify_2}\begin{aligned}
\A(\widetilde{\Sigma})/\G(\widetilde{\Sigma},\{P,\ol{P}\})
& \longleftrightarrow\widetilde{\mathcal{R}}, \\
\A(\widetilde{\Sigma})/\G(\widetilde{\Sigma})
& \longleftrightarrow\widetilde{\M}=\widetilde{\mathcal{R}}/(G\times G).
\end{aligned}\end{equation}
\noindent Compare the definition of $\widetilde{R}$ with
equation~(\ref{polygons}) in Remark~\ref{polygon_remark}.  To
understand the action of $G\times G$ on $\widetilde{\mathcal{R}}$,
consider how the holonomies along the lifts of the generators are
effected by a gauge transformation on $\widetilde{\Sigma}$ that
evaluates to $g$ at $P$ and to $h$ at $\ol{P}$.

\subsection{Some induced maps}\label{induced_maps}

Let $\rho$ and $\widetilde{\rho}$ be the natural quotient maps,
\begin{align*}
& \rho:\mathcal{R}\longrightarrow\M=\mathcal{R}/G, \\
& \widetilde{\rho}:\widetilde{\mathcal{R}}\longrightarrow
 \widetilde{\M}=\widetilde{\mathcal{R}}/(G\times G).
\end{align*}

The pullback of the covering map from $\widetilde{\Sigma}$ to
$\Sigma$ takes a Lie algebra valued 1-form on $\Sigma$ and
produces one on $\widetilde{\Sigma}$, and this pullback induces a
(well defined) map from $\A(\Sigma)/\G(\Sigma,p)$ to
$\A(\widetilde{\Sigma})/\G(\widetilde{\Sigma},\{P,\ol{P}\})$.  Using the
identifications appearing in equations~(\ref{identify_1})
and~(\ref{identify_2}), the pullback of the covering map induces a map
$\COVER:\mathcal{R}\rightarrow\widetilde{\mathcal{R}}$ given by
\begin{equation}
\COVER(c,b,\AAA)=(c,b,\AAA,c,b,\AAA).
\end{equation}
\noindent The map $\COVER$ satisfies $\COVER(g.(c,b,\AAA))=
(g,g).(\COVER(c,b,\AAA))$, and thus descends to a well-defined map
$\cover:\mathcal{M}\rightarrow\widetilde{\mathcal{M}}$.

Similarly, the pullback of the nontrivial deck transformation of
$\widetilde{\Sigma}$ induces an involution
$\DECK:\widetilde{\mathcal{R}}\rightarrow\widetilde{\mathcal{R}}$
given by
\begin{equation}
\DECK(c,b,\AAA,\ol{c},\ol{b},\ol{\AAA})
=(\ol{c},\ol{b},\ol{\AAA},c,b,\AAA).
\end{equation}
\noindent The map $\DECK$ satisfies
$\DECK((g,h).(c,b,\AAA,\ol{c},\ol{b},\ol{\AAA}))=
(h,g).(\DECK(c,b,\AAA,\ol{c},\ol{b},\ol{\AAA}))$, and thus descends to
a well-defined involution
$\deck:\widetilde{\mathcal{M}}\rightarrow\widetilde{\mathcal{M}}$.

The following commutative diagrams attempt to summarize these
definitions:
\begin{equation}
\begin{matrix}\text{Deck}\\ \text{transformation}\end{matrix}
\qquad\text{\LARGE$\rightsquigarrow$}\qquad
\begin{CD}
\widetilde{\mathcal{R}} @>\DECK>> \widetilde{\mathcal{R}} \\
@V{\widetilde{\rho}}VV @VV{\widetilde{\rho}}V\\
\widetilde{\mathcal{M}} @>\deck>> \widetilde{\mathcal{M}}
\end{CD}
\end{equation}
\begin{equation}\label{cover}
\begin{matrix}\text{Covering}\\ \text{map}\end{matrix}
\qquad\text{\LARGE$\rightsquigarrow$}\qquad
\begin{CD}
\mathcal{R} @>\COVER>> \widetilde{\mathcal{R}} \\
@V{\rho}VV @VV{\widetilde{\rho}}V \\
\mathcal{M} @>\cover>> \widetilde{\mathcal{M}}
\end{CD}
\end{equation}

\begin{remark}
Recall that $G=SU(2)$.  Although $\COVER$ is injective, the map
$\cover$ is not injective.  If $\rho(c,b,\AAA)\in\M$ and
\begin{equation}
\rho(f,e,\mathbb{D})=\begin{cases}
 \rho(c,b,-\AAA) & \text{case (i),} \\
 \rho(c,-b,-\AAA) & \text{case (ii),} \\
 \rho(c,-b,\AAA) & \text{case (iii)} \end{cases}
\end{equation}
\noindent then  $\cover(\rho(c,b,\AAA))=\cover(\rho(f,e,\mathbb{D}))$.
It can be shown that these two points of $\M$ are always distinct if
$\chi(\Sigma)$ is odd, and ``usually'' distinct if $\chi(\Sigma)$ is even:
in case~(i), for instance, if $\tr(a_1)\neq 0$ then
\begin{equation}
\rho(c,b,\AAA)\neq\rho(c,b,-\AAA)
\end{equation}
\noindent because $a_1$ is not conjugate to $-a_1$.
\end{remark}

\subsection{The fixed point set of $\deck$}\label{fixed_point}

Let $\widetilde{\mathcal{M}}^\deck$ be the fixed point set of $\deck$.
In \cite{Ho}, this fixed point set is shown to be a Lagrangian
submanifold of the moduli space $\widetilde{\M}$ with respect to the
symplectic form appearing in equation~(\ref{symplectic_form}).  The
following proposition, adapted from Section~3.1 of \cite{Ho}, allows us
to write $\widetilde{\mathcal{M}}^\deck$ as a union of more
manageable sets.

\begin{proposition}[N.-K. Ho]\label{proposition}
For each $x\in G$, define a subset
$\mathcal{N}_x\subset\widetilde{\mathcal{R}}$ as follows:
\begin{equation}\begin{aligned}
\text{case (i),}\quad & \mathcal{N}_x=
 \left\{(c,b,\AAA,c,b,\AAA x)\in\widetilde{\mathcal{R}} \ \ | \ \
 x.(c,b,\AAA)=(c,b,\AAA)\right\}; \\
\text{case (ii),}\quad & \mathcal{N}_x=
 \left\{(c,b,\AAA,c,bx,\AAA x)\in\widetilde{\mathcal{R}} \ \ | \ \
 x.(c,b,\AAA)=(c,b,\AAA)\right\}; \\
\text{case (iii),}\quad & \mathcal{N}_x=
 \left\{(c,b,\AAA,c,bx,\AAA)\in\widetilde{\mathcal{R}} \ \ | \ \
 x.(c,b,\AAA)=(c,b,\AAA)\right\}.
\end{aligned}\end{equation}
\noindent The image of $\cover$ is $\cover(\mathcal{M})=
\widetilde{\rho}(\mathcal{N}_1)$, and the fixed point set of $\deck$ is
$\widetilde{\M}^\deck=\bigcup\limits_{x\in G}
\widetilde{\rho}(\mathcal{N}_x)$.
\end{proposition}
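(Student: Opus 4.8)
The plan is to prove the two claims separately, each by a direct computation using the explicit formulas for $\COVER$, $\DECK$, and the $(G\times G)$-action on $\widetilde{\mathcal R}$.

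First I would handle the image of $\cover$. By definition $\cover(\M)=\widetilde\rho(\COVER(\mathcal R))$, and $\COVER(c,b,\AAA)=(c,b,\AAA,c,b,\AAA)$. So I must show that the $(G\times G)$-orbit of the set $\COVER(\mathcal R)$ equals the $(G\times G)$-orbit of $\mathcal N_1$. One inclusion is trivial: when $x=1$ the condition $x.(c,b,\AAA)=(c,b,\AAA)$ is vacuous, and $\mathcal N_1=\{(c,b,\AAA,c,b,\AAA)\}=\COVER(\mathcal R)$ in case (i); in cases (ii) and (iii) the $x=1$ slot gives $(c,b,\AAA,c,b\cdot 1,\AAA\cdot 1)=(c,b,\AAA,c,b,\AAA)$ as well, so $\mathcal N_1=\COVER(\mathcal R)$ on the nose in all three cases. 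Hence $\widetilde\rho(\mathcal N_1)=\widetilde\rho(\COVER(\mathcal R))=\cover(\M)$, which is the first assertion and requires essentially no work once one unwinds the definitions.

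The substantive part is $\widetilde\M^\deck=\bigcup_{x\in G}\widetilde\rho(\mathcal N_x)$. For the inclusion $\supseteq$, I would check that each $\mathcal N_x$ maps into the fixed point set: given a point of $\mathcal N_x$, say $(c,b,\AAA,c,b,\AAA x)$ in case (i), apply $\DECK$ to get $(c,b,\AAA x,c,b,\AAA)$, and then exhibit $(g,h)\in G\times G$ carrying this back to the original point. The natural guess is $(g,h)=(x,x)$ — but wait, one should use that $x.(c,b,\AAA)=(c,b,\AAA)$, i.e. $x$ commutes with $c$, with $b$, and with each $a_j$; using this one finds $(x,x).(c,b,\AAA x,c,b,\AAA)=(xcx^{-1},xbx^{-1},x\AAA x\cdot x^{-1},\dots)=(c,b,\AAA x,\dots)$, so in fact the element $(x,1)$ or $(1,x)$ or $(x,x)$ (the exact choice depending on the case and on how the two $G$-factors act on the barred versus unbarred slots) does the job; I would determine the correct element in each of the three cases by matching the action formulas. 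So $\widetilde\rho(\mathcal N_x)\subseteq\widetilde\M^\deck$.

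For the reverse inclusion $\subseteq$, take a point $\widetilde\rho(c,b,\AAA,\ol c,\ol b,\ol\AAA)$ fixed by $\deck$. This means there is $(g,h)\in G\times G$ with $(g,h).\DECK(c,b,\AAA,\ol c,\ol b,\ol\AAA)=(c,b,\AAA,\ol c,\ol b,\ol\AAA)$, i.e. $(g,h).(\ol c,\ol b,\ol\AAA,c,b,\AAA)=(c,b,\AAA,\ol c,\ol b,\ol\AAA)$. Reading off coordinates (case (i), say): $g\ol c g^{-1}=c$, $g\ol b g^{-1}=b$, $g\ol\AAA h^{-1}=\AAA$, and $h c h^{-1}=\ol c$, $h b h^{-1}=\ol b$, $h\AAA g^{-1}=\ol\AAA$. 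The first three equations let me move the representative: replacing $(c,b,\AAA,\ol c,\ol b,\ol\AAA)$ by $(g^{-1},h^{-1}).(\dots)$ — or more cleanly, using the $(G\times G)$-freedom to normalize so that the barred triple is conjugated into the unbarred one — I can arrange $\ol c=c$, $\ol b=b$, and $\ol\AAA=\AAA x$ for an appropriate $x$ built from $g,h$; then the remaining equations force $x$ to commute with $c,b$ and each $a_j$, i.e. $x.(c,b,\AAA)=(c,b,\AAA)$, placing the point in $\mathcal N_x$. This normalization-and-bookkeeping step, carried out in each of the three cases with the correct action formulas, is the main obstacle: one has to be careful about which factor ($g$ or $h$) acts on which slot and about left-versus-right multiplication, and cases (ii) and (iii) mix $c$ with $\ol b$ in the relations so the conjugations interact. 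I expect no conceptual difficulty, only the need for disciplined index-chasing; the payoff is that in every case the fixed-point condition becomes precisely "$x$ centralizes the tuple $(c,b,\AAA)$," which is the defining condition of $\mathcal N_x$.
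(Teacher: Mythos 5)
Your proposal follows essentially the same route as the paper: the first assertion is exactly the observation $\mathcal{N}_1=\COVER(\mathcal{R})$ plus the commutative square relating $\COVER$ and $\cover$, and the second is handled by unwinding the fixed-point condition into coordinate equations and then using the $(G\times G)$-action to normalize the barred triple onto the unbarred one. That is precisely what the paper does, so the strategy is sound.

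The one place you should be careful is the choice of group element, which you leave unresolved and where your trial candidates do not quite work in case~(i). For the inclusion $\widetilde{\rho}(\mathcal{N}_x)\subseteq\widetilde{\M}^\deck$, the element $(x,x)$ you compute with \emph{stabilizes} $\DECK(c,b,\AAA,c,b,\AAA x)$ rather than carrying it back to $(c,b,\AAA,c,b,\AAA x)$; the element that does carry it back is $(1,x)$, equivalently the paper's identity $\DECK(c,b,\AAA,c,b,\AAA x)=(1,x^{-1}).(c,b,\AAA,c,b,\AAA x)$. For the reverse inclusion, the element $(g^{-1},h^{-1})$ you suggest conjugates the unbarred and barred slots in opposite directions and does not land in any $\mathcal{N}_x$; the paper instead applies $(1,g^{-1})$ (in its own sign convention for $(g,h)$), which sends the point to $(c,b,\AAA g,c,b,(\AAA g)(hg)^{-1})\in\mathcal{N}_{(hg)^{-1}}$, the commutation condition $(hg)^{-1}.(c,b,\AAA g)=(c,b,\AAA g)$ dropping out of the remaining three coordinate equations. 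These are purely bookkeeping repairs and do not affect the validity of your overall argument, but they are exactly the details you flagged as unresolved, and they do need to be pinned down case by case before the proof is complete.
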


\begin{proof}
To prove the first statement, note that
\begin{equation}
\mathcal{N}_1=\left\{(c,b,\AAA,c,b,\AAA) \ : \
(c,b,\AAA)\in\mathcal{R}\right\}=\COVER(\mathcal{R}),
\end{equation}
\noindent and use diagram~(\ref{cover}) to conclude that
$\widetilde{\rho}(\mathcal{N}_1)=\cover(\M)$.

We'll only prove the second statement for case~(i), since the proofs
are nearly identical for each of the three cases.  First note that
\begin{align}
& \widetilde{\rho}(c,b,\AAA,\ol{c},\ol{b},\ol{\AAA})
 \in\widetilde{\mathcal{M}}^\deck \notag\\
& \qquad \Longleftrightarrow \ \ \exists \ g,h\in G \ \text{such that} \
 \DECK(c,b,\AAA,\ol{c},\ol{b},\ol{\AAA})
 =(g,h).(c,b,\AAA,\ol{c},\ol{b},\ol{\AAA}) \\
& \qquad \Longleftrightarrow \ \ (\ol{c},\ol{b},\ol{\AAA},c,b,\AAA)
 =(gcg^{-1},gbg^{-1},g\AAA h^{-1},
 h\ol{c}h^{-1},h\ol{b}h^{-1},h\ol{\AAA}g^{-1}) \notag\\
& \qquad \Longleftrightarrow \ \ \exists \ g,h\in G \ \text{such that} \
 \left\{\begin{array}{l}
  \ol{c}=gcg^{-1} \\
  \ol{b}=gbg^{-1} \\
  \ol{\AAA}=g\AAA h^{-1} \\
  (hg)^{-1}c(hg)=c \\
  (hg)^{-1}b(hg)=b \\
  (hg)^{-1}(\AAA g)(hg)=\AAA g. \end{array}\right.
 \label{fixed_point_equation}
\end{align}
\noindent ($\subseteq$)
If $\widetilde{\rho}(c,b,\AAA,\ol{c},\ol{b},\ol{\AAA})
\in\widetilde{\mathcal{M}}^\deck$ then
equation~(\ref{fixed_point_equation}) implies that
\begin{equation}\begin{aligned}
(1,g^{-1}).(c,b,\AAA,\ol{c},\ol{b},\ol{\AAA})
& =(c,b,\AAA g,c,b,\AAA h^{-1}) \\
& =(c,b,\AAA g,c,b,(\AAA g)(hg)^{-1})\in\mathcal{N}_{(hg)^{-1}}.
\end{aligned}\end{equation}
\noindent ($\supseteq$)
If $(c,b,\AAA,c,b,\AAA x)\in\mathcal{N}_x$ then
\begin{align}
\DECK(c,b,\AAA,c,b,\AAA x)
&= (c,b,\AAA x,c,b,\AAA) \notag\\
&= (c,b,\AAA x,x^{-1}cx,x^{-1}bx,x^{-1}\AAA x) \\
&= (1,x^{-1}).(c,b,\AAA,c,b,\AAA x). \notag\qedhere
\end{align}
\end{proof}

\begin{remark}
An element $(c,b,\AAA,\ol{c},\ol{b},\ol{\AAA})$ of
$\widetilde{\mathcal{R}}$ lies in $\mathcal{N}_x$ if and only if
\begin{equation*}
\DECK(c,b,\AAA,\ol{c},\ol{b},\ol{\AAA})
=(1,x^{-1}).(c,b,\AAA,\ol{c},\ol{b},\ol{\AAA}).
\end{equation*}
\end{remark}

\begin{remark}
Recall that $G=SU(2)$.  In \cite{Ho}, an element
$\widetilde{\rho}(c,b,\AAA,\ol{c},\ol{b},\ol{\AAA})$ of $\widetilde{\M}$
is called \emph{generic} when the stabilizer of
$(c,b,\AAA,\ol{c},\ol{b},\ol{\AAA})\in\widetilde{\mathcal{R}}$
is $\{(1,1),(-1,-1)\}$.  Since an element of $\mathcal{N}_x$ is stabilized
by $(x,x)$, the fixed point set $\widetilde{\mathcal{M}}^\deck$ is
generically $\widetilde{\rho}(\mathcal{N}_1)\cup
\widetilde{\rho}(\mathcal{N}_{-1})$, and a straightforward computation
shows that $\widetilde{\rho}(\mathcal{N}_1)$ and
$\widetilde{\rho}(\mathcal{N}_{-1})$ are generically disjoint.  We shall
content ourselves with the description of
$\widetilde{\mathcal{M}}^\deck$ appearing in
Proposition~\ref{proposition}; arguing as in \cite{Ho}, however, it is
possible to show that $\widetilde{\mathcal{M}}^\deck=
\widetilde{\rho}(\mathcal{N}_1)\cup\widetilde{\rho}(\mathcal{N}_{-1})$.
\end{remark}

{\begin{figure}[h] 
\psfrag{c}{${\scriptstyle \gamma}$}
\psfrag{b}{${\scriptstyle \beta}$}
\psfrag{a}{${\scriptstyle \alpha_j}$}
\psfrag{U}{$U$}
\psfrag{Xi}{${\scriptstyle \text{flow}} \ \Xi_t^+$}
\includegraphics{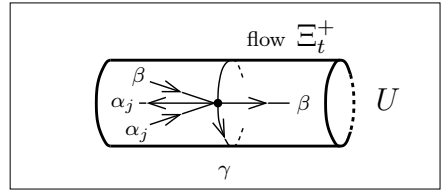}
\caption{\label{x8_tube}
The cylinder $U\subset\Sigma$.}
\end{figure}}

{\begin{figure}[h] 
\psfrag{C}{${\scriptstyle \Gamma}$}
\psfrag{0C}{${\scriptstyle \ol{\Gamma}}$}
\psfrag{B}{${\scriptstyle B}$}
\psfrag{0B}{${\scriptstyle \ol{B}}$}
\psfrag{A}{${\scriptstyle A_j}$}
\psfrag{0A}{${\scriptstyle \ol{A}_j}$}
\psfrag{V}{$V$}
\psfrag{0V}{$\ol{V}$}
\psfrag{Phi}{${\scriptstyle \text{flow}} \ \Phi_t^+$}
\psfrag{Psi}{${\scriptstyle \text{flow}} \ \Psi_t^-$}
\psfrag{case(i)}{$\text{case(i)}$}
\psfrag{case(ii)}{$\text{case(ii)}$}
\psfrag{case(iii)}{$\text{case(iii)}$}
\includegraphics{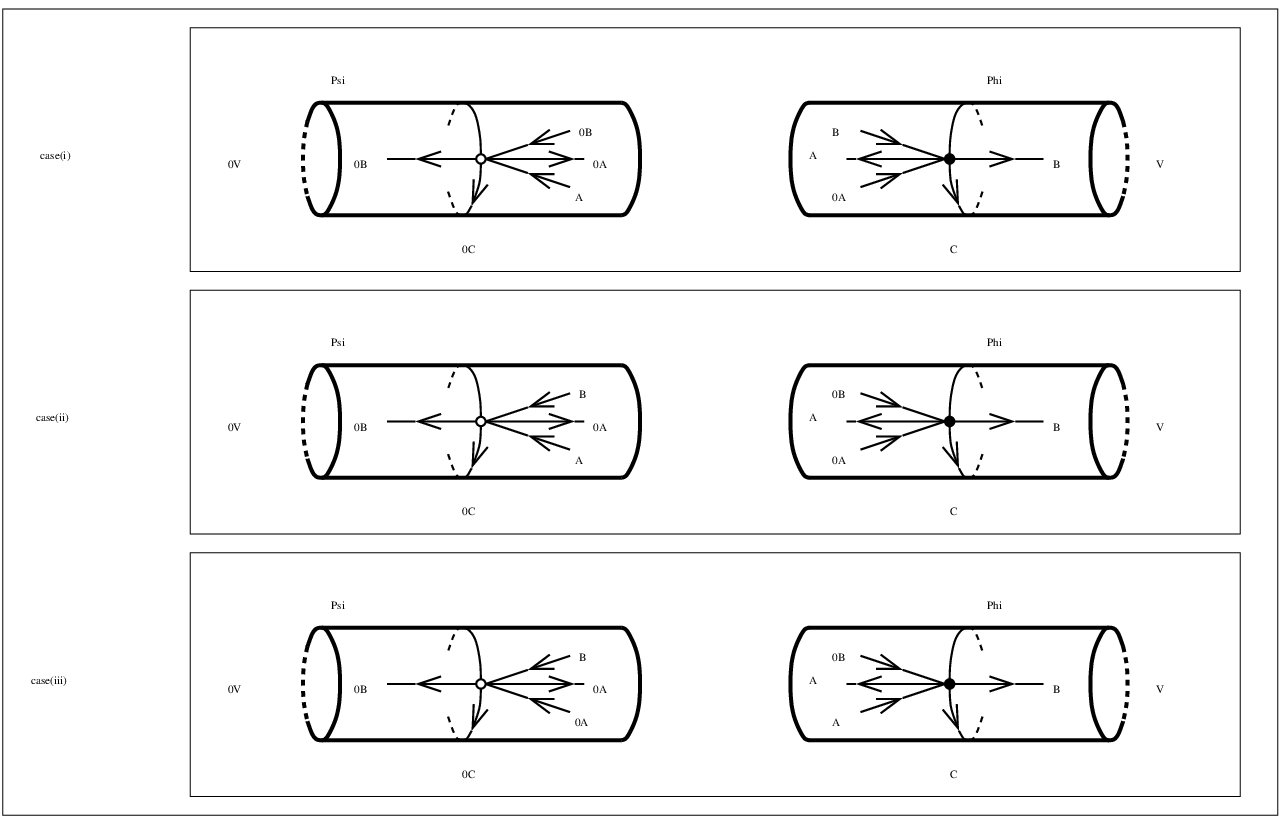}
\caption{\label{x9_tubes}
The cylinders $\ol{V},V\subset\widetilde{\Sigma}$.}
\end{figure}}

\subsection{The Goldman flows}\label{goldman_flow}

The cylindrical neighbourhood $U$ of $\gamma$ appears in
Figure~\ref{x8_tube}, and the cylindrical neighbourhoods $V$ of
$\Gamma$ and $\ol{V}$ of $\ol{\Gamma}$ are shown in
Figure~\ref{x9_tubes}.  Recall from Section~\ref{topology_cover} that
both $V$ and $\ol{V}$ are embedded in $\widetilde{\Sigma}$ in an
orientation preserving way, and that the covering map is orientation
preserving on $V$ and orientation reversing on $\ol{V}$.  In cases~(i)
and~(ii), each $A_j$ is a path from $P$ to $\ol{P}$, and each
$\ol{A}_j$ is a path from $\ol{P}$ to $P$.  In case (iii), each $A_j$ is a
loop based at $P$, and each $\ol{A}_j$ is a loop based at $\ol{P}$.

Under the identifications appearing in lines~(\ref{identify_1})
and~(\ref{identify_2}), if $(c,b,\AAA)\in\mathcal{R}$ then $c$
corresponds with the holonomy along $\gamma$, and if
$(c,b,\AAA,\ol{c},\ol{b},\ol{\AAA})\in\widetilde{\mathcal{R}}$ then $c$
and $\ol{c}$ correspond respectively with the holonomies along
$\Gamma$ and $\ol{\Gamma}$.  For $t\in\R$ and
$g\in G\smallsetminus\{\pm 1\}$, let
\begin{equation}
\zeta_t(g)=\exp(tF(g)),
\end{equation}
\noindent where $F$ is the the variation of $f$ defined in
equation~(\ref{variation}).  By comparing Figure~\ref{x8_tube} and
Figure~\ref{x9_tubes} with the table in Figure~\ref{x3_table}, we can
now describe the Goldman flows associated to each of the three
cylinders $U$, $V$ and $\ol{V}$.

\begin{definition}
Define an $\R$-action $\{\Xi_t^+\}_{t\in\R}$ on the following subset
$\mathcal{R}_\gamma$ of $\mathcal{R}$:
\begin{equation}\begin{aligned}
& \mathcal{R}_\gamma=\{(c,b,\AAA)\in\mathcal{R} \ | \ c\neq\pm 1\}, \\
& \Xi_t^+(c,b,\AAA)=(c,(\zeta_t(c))^{-1}b,\AAA).
\end{aligned}\end{equation}
\noindent The $\R$-action $\Xi_t^+$ covers the $2\pi$-periodic
Goldman flow $\Xi_t$ on $\M_\gamma=\mathcal{R}_\gamma/G$
associated to the cylinder $U$.
\end{definition}

\begin{definition}
Define an $\R$-action $\{\Phi_t^+\}_{t\in\R}$ on the following subset
$\widetilde{\mathcal{R}}_\Gamma$ of $\widetilde{\mathcal{R}}$:
\begin{equation}\begin{aligned}
& \widetilde{\mathcal{R}}_{\Gamma}=
 \{(c,b,\AAA,\ol{c},\ol{b},\ol{\AAA})\in\widetilde{\mathcal{R}} \ | \
 c\neq\pm 1\}, \\
& \Phi_t^+(c,b,\AAA,\ol{c},\ol{b},\ol{\AAA})=
 (c,(\zeta_t(c))^{-1}b,\AAA,\ol{c},\ol{b},\ol{\AAA}).
\end{aligned}\end{equation}
\noindent The $\R$-action $\Phi_t^+$ covers the $2\pi$-periodic
Goldman flow $\Phi_t$ on $\widetilde{\M}_\Gamma=
\widetilde{\mathcal{R}}_\Gamma/(G\times G)$ associated to the
cylinder $V$.
\end{definition}

\begin{definition}
Define an $\R$-action $\{\Psi_t^-\}_{t\in\R}$ on the following subset
$\widetilde{\mathcal{R}}_{\ol{\Gamma}}$ of $\widetilde{\mathcal{R}}$:
\begin{equation}\begin{aligned}
& \widetilde{\mathcal{R}}_{\ol{\Gamma}}=
 \{(c,b,\AAA,\ol{c},\ol{b},\ol{\AAA})\in\widetilde{\mathcal{R}} \ | \
 \ol{c}\neq\pm 1\}, \\
& \Psi_{t}^-(c,b,\AAA,\ol{c},\ol{b},\ol{\AAA})=
 (c,b,\AAA,\ol{c},(\zeta_t(\ol{c}))\ol{b},\ol{\AAA}).
\end{aligned}\end{equation}
\noindent The $\R$-action $\Psi_t^-$ covers the $2\pi$-periodic
Goldman flow $\Psi_t$ on $\widetilde{\M}_{\ol{\Gamma}}=
\widetilde{\mathcal{R}}_{\ol{\Gamma}}/(G\times G)$ associated to the
cylinder $\ol{V}$.
\end{definition}

\begin{remark}
The actions $\Phi_t^+$ and $\Psi_t^-$ commute because the cylinders
$V$ and $\ol{V}$ are disjoint.  The composition
$\Phi_t^+\circ\Psi_{-t}^-$ defines an $\R$-action on
$\widetilde{\mathcal{R}}_\Gamma\cap
\widetilde{\mathcal{R}}_{\ol{\Gamma}}$,
\begin{equation}
(\Phi_t^+\circ\Psi_{-t}^-)(c,b,\AAA,\ol{c},\ol{b},\ol{\AAA})=
(c,(\zeta_t(c))^{-1}b,\AAA,\ol{c},(\zeta_t(\ol{c}))^{-1}\ol{b},\ol{\AAA}),
\end{equation}
\noindent which covers a $2\pi$-periodic $\R$-action
$\Phi_t\circ\Psi_{-t}$ on
$\widetilde{\M}_\Gamma\cap\widetilde{\M}_{\ol{\Gamma}}$.  In the
introduction, this flow was referred to as the \emph{composite flow}.
\end{remark}

\subsection{Proof of the main theorem}\label{proof}

In this section we prove that the composite flow preserves not only the
Langrangian submanifold $\widetilde{\M}^\deck$, but also the image of
the map $\cover$.

\begin{proposition}\label{prop1}
$(\Phi_t\circ\Psi_{-t})\circ\deck=\deck\circ(\Phi_t\circ\Psi_{-t})$.
\end{proposition}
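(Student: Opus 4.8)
The plan is to verify the identity first at the level of the explicit lifts to $\widetilde{\mathcal{R}}$ and then descend it to $\widetilde{\M}$. Recall from Section~\ref{goldman_flow} that the composite flow $\Phi_t\circ\Psi_{-t}$ on $\widetilde{\M}_\Gamma\cap\widetilde{\M}_{\ol{\Gamma}}$ is covered by the map on $\widetilde{\mathcal{R}}_\Gamma\cap\widetilde{\mathcal{R}}_{\ol{\Gamma}}$ given by
\[
(\Phi_t^+\circ\Psi_{-t}^-)(c,b,\AAA,\ol{c},\ol{b},\ol{\AAA})
=(c,(\zeta_t(c))^{-1}b,\AAA,\ol{c},(\zeta_t(\ol{c}))^{-1}\ol{b},\ol{\AAA}),
\]
while $\deck$ is covered by $\DECK(c,b,\AAA,\ol{c},\ol{b},\ol{\AAA})=(\ol{c},\ol{b},\ol{\AAA},c,b,\AAA)$. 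First I would observe that $\DECK$ interchanges $\widetilde{\mathcal{R}}_\Gamma$ and $\widetilde{\mathcal{R}}_{\ol{\Gamma}}$, hence preserves their intersection, so both composites $\DECK\circ(\Phi_t^+\circ\Psi_{-t}^-)$ and $(\Phi_t^+\circ\Psi_{-t}^-)\circ\DECK$ are defined on all of $\widetilde{\mathcal{R}}_\Gamma\cap\widetilde{\mathcal{R}}_{\ol{\Gamma}}$.

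The computation is then immediate. Applying $\DECK$ first and then $\Phi_t^+\circ\Psi_{-t}^-$ sends $(c,b,\AAA,\ol{c},\ol{b},\ol{\AAA})$ to $(\ol{c},(\zeta_t(\ol{c}))^{-1}\ol{b},\ol{\AAA},c,(\zeta_t(c))^{-1}b,\AAA)$; applying $\Phi_t^+\circ\Psi_{-t}^-$ first and then $\DECK$ produces exactly the same six-tuple. (The only elementary fact used is $\zeta_{-t}(g)=(\zeta_t(g))^{-1}$, which follows at once from $\zeta_t(g)=\exp(tF(g))$.) Thus $\DECK\circ(\Phi_t^+\circ\Psi_{-t}^-)=(\Phi_t^+\circ\Psi_{-t}^-)\circ\DECK$ as maps on $\widetilde{\mathcal{R}}_\Gamma\cap\widetilde{\mathcal{R}}_{\ol{\Gamma}}$.

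Finally I would descend to the quotient. Since $\Phi_t^+$ and $\Psi_{-t}^-$ are $(G\times G)$-equivariant (they cover the Goldman flows $\Phi_t$ and $\Psi_{-t}$ by construction) and $\DECK$ satisfies $\DECK((g,h).X)=(h,g).\DECK(X)$, both sides of the displayed equality descend through $\widetilde{\rho}$ to well-defined maps on $\widetilde{\M}_\Gamma\cap\widetilde{\M}_{\ol{\Gamma}}$, namely $\deck\circ(\Phi_t\circ\Psi_{-t})$ and $(\Phi_t\circ\Psi_{-t})\circ\deck$; equality of the lifts forces equality of the descents, which is the assertion of the proposition. There is no genuine obstacle here: the only things to keep track of are the matching of domains under $\DECK$ and the equivariance needed to pass to $\widetilde{\M}$. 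The real content is simply that the formula for $\Phi_t^+\circ\Psi_{-t}^-$ is symmetric under the flip of the two halves of the six-tuple $(c,b,\AAA)\leftrightarrow(\ol{c},\ol{b},\ol{\AAA})$, and that flip is precisely $\DECK$.
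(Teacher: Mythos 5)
Your proposal is correct and follows essentially the same route as the paper: verify the commutation identity on the lifts by the explicit computation $(\Phi_t^+\circ\Psi_{-t}^-)\circ\DECK=\DECK\circ(\Phi_t^+\circ\Psi_{-t}^-)$ on six-tuples, then descend through $\widetilde{\rho}$. Your added remarks on $\DECK$ interchanging $\widetilde{\mathcal{R}}_\Gamma$ and $\widetilde{\mathcal{R}}_{\ol{\Gamma}}$ and on the equivariance needed to pass to the quotient simply make explicit what the paper leaves implicit.
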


\begin{proof} It suffices to show that
$(\Phi_t^+\circ\Psi_{-t}^-)\circ\DECK=
\DECK\circ(\Phi_t^+\circ\Psi_{-t}^-)$.
\begin{align}
((\Phi_t^+\circ\Psi_{-t}^-)\circ\DECK)(c,b,\AAA,\ol{c},\ol{b},\ol{\AAA})
& =(\Phi_t^+\circ\Psi_{-t}^-)(\ol{c},\ol{b},\ol{\AAA},c,b,\AAA) \\
& =(\ol{c},(\zeta_t(\ol{c}))^{-1}\ol{b},\ol{\AAA},
 c,(\zeta_t(c))^{-1}b,\AAA) \notag\\
& =\DECK(c,(\zeta_t(c))^{-1}b,\AAA,
 \ol{c},(\zeta_t(\ol{c}))^{-1}\ol{b},\ol{\AAA}) \notag\\
& =(\DECK\circ(\Phi_t^+\circ\Psi_{-t}^-))
 (c,b,\AAA,\ol{c},\ol{b},\ol{\AAA}). \notag\qedhere
\end{align}
\end{proof}

\begin{proposition}\label{prop2}
$(\Phi_t\circ\Psi_{-t})\circ\cover=\cover\circ\Xi_t$.
\end{proposition}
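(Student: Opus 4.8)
The plan is to imitate the proof of Proposition~\ref{prop1}: work first at the level of the holonomy spaces $\mathcal{R}_\gamma$ and $\widetilde{\mathcal{R}}$ with the explicit lifted flows, verify the identity there by substitution, and then descend through the quotient maps. Concretely, I would first reduce the proposition to the lifted identity
\[
(\Phi_t^+\circ\Psi_{-t}^-)\circ\COVER=\COVER\circ\Xi_t^+\qquad\text{on }\mathcal{R}_\gamma .
\]
This reduction uses diagram~(\ref{cover}) (that is, $\widetilde{\rho}\circ\COVER=\cover\circ\rho$), together with the facts from Section~\ref{goldman_flow} that $\Xi_t^+$ covers $\Xi_t$ and $\Phi_t^+\circ\Psi_{-t}^-$ covers $\Phi_t\circ\Psi_{-t}$: writing a point of $\M_\gamma$ as $\rho(c,b,\AAA)$ with $(c,b,\AAA)\in\mathcal{R}_\gamma$ and chasing it around both composites, and using surjectivity of $\rho$, reduces the equality of the two composite maps to the displayed lifted identity.

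\textbf{Key steps.} Before computing I would check that the domains match up. If $(c,b,\AAA)\in\mathcal{R}_\gamma$, i.e.\ $c\neq\pm 1$, then $\COVER(c,b,\AAA)=(c,b,\AAA,c,b,\AAA)$ has both of its ``$c$-entries'' equal to $c\neq\pm 1$, so it lies in $\widetilde{\mathcal{R}}_\Gamma\cap\widetilde{\mathcal{R}}_{\ol{\Gamma}}$, the domain of $\Phi_t^+\circ\Psi_{-t}^-$; the same observation shows that $\cover$ carries $\M_\gamma$ into $\widetilde{\M}_\Gamma\cap\widetilde{\M}_{\ol{\Gamma}}$, so the two sides of the proposition have this common domain. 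The computation is then a one-line substitution: using the formula $(\Phi_t^+\circ\Psi_{-t}^-)(c,b,\AAA,\ol{c},\ol{b},\ol{\AAA})=(c,(\zeta_t(c))^{-1}b,\AAA,\ol{c},(\zeta_t(\ol{c}))^{-1}\ol{b},\ol{\AAA})$ and the fact that the fourth entry of $\COVER(c,b,\AAA)$ is again $c$ (so $\zeta_t(\ol{c})=\zeta_t(c)$),
\begin{align*}
(\Phi_t^+\circ\Psi_{-t}^-)(\COVER(c,b,\AAA))
&=(\Phi_t^+\circ\Psi_{-t}^-)(c,b,\AAA,c,b,\AAA)\\
&=\bigl(c,(\zeta_t(c))^{-1}b,\AAA,\,c,(\zeta_t(c))^{-1}b,\AAA\bigr)\\
&=\COVER\bigl(c,(\zeta_t(c))^{-1}b,\AAA\bigr)=\COVER(\Xi_t^+(c,b,\AAA)),
\end{align*}
which is exactly the lifted identity. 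Applying $\widetilde{\rho}$ and rewriting with the covering relations then yields the statement on the moduli spaces.

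\textbf{Main obstacle.} Honestly there is very little to overcome at the level of this proof: once the bookkeeping of domains is in place, it is the same kind of mechanical verification as Proposition~\ref{prop1}. The only point worth flagging is conceptual, and it is carried by the constructions of Section~\ref{goldman_flow} rather than by this argument: the reason it is the \emph{composite} flow $\Phi_t\circ\Psi_{-t}$, and not the product flow $\Phi_t\circ\Psi_t$, that is compatible with $\cover$ is that $\gamma$ pulls back to the two curves $\Gamma\subset V$ and $\ol{\Gamma}\subset\ol{V}$, with the covering map orientation-preserving on $V$ and orientation-reversing on $\ol{V}$, so the ``$\ol{V}$'' flow must be run backwards; in the computation above this is exactly the point that the $\ol{b}$-entry gets modified by $(\zeta_t(\ol{c}))^{-1}$, which agrees with the modification of $b$ by $(\zeta_t(c))^{-1}$ precisely because $\ol{c}=c$ on the image of $\COVER$. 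I would also note that although I have written things using the case~(i) form of the $G\times G$-action, the formulas for $\COVER$ and for all three lifted flows $\Xi_t^+$, $\Phi_t^+$, $\Psi_{-t}^-$ are uniform across cases~(i)--(iii), so no separate case analysis is needed.
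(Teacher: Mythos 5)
Your proposal is correct and takes essentially the same approach as the paper: reduce to the lifted identity $(\Phi_t^+\circ\Psi_{-t}^-)\circ\COVER=\COVER\circ\Xi_t^+$ and verify it by direct substitution using the explicit formulas for $\COVER$, $\Xi_t^+$, $\Phi_t^+$, and $\Psi_{-t}^-$. The extra remarks you include about matching domains and about why the composite rather than the product flow appears are sound but not part of the paper's proof, which goes straight to the substitution.
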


\begin{proof} It suffices to show that
$(\Phi_t^+\circ\Psi_{-t}^-)\circ\COVER=\COVER\circ\Xi_t^+$.
\begin{align}
((\Phi_t^+\circ\Psi_{-t}^-)\circ\COVER)(c,b,\AAA)
& =(\Phi_t^+\circ\Psi_{-t}^-)(c,b,\AAA,c,b,\AAA) \\
& =(c,(\zeta_t(c))^{-1}b,\AAA,c,(\zeta_t(c))^{-1}b,\AAA) \notag\\
& =\COVER(c,(\zeta_t(c))^{-1}b,\AAA) \notag\\
& =(\COVER\circ\Xi_t^+)(c,b,\AAA). \notag\qedhere
\end{align}
\end{proof}

\begin{lemma}\label{lemma}
Recall that $\cover(\M)=\widetilde{\rho}(\mathcal{N}_1)$ and
$\widetilde{\M}^\deck=\bigcup\limits_{x\in G}
\widetilde{\rho}(\mathcal{N}_x)$.  The flow $\Phi_t\circ\Psi_{-t}$ on
$\widetilde{\M}_\Gamma\cap\widetilde{\M}_{\ol{\Gamma}}$ preserves
$\widetilde{\rho}(\mathcal{N}_x)$.
\end{lemma}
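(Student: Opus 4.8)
The plan is to verify directly, in each of the three cases, that the explicitly-computed composite flow $\Phi_t^+\circ\Psi_{-t}^-$ on $\widetilde{\mathcal{R}}_\Gamma\cap\widetilde{\mathcal{R}}_{\ol{\Gamma}}$ carries $\mathcal{N}_x$ into itself, and then push this statement down to $\widetilde{\mathcal{M}}$ via $\widetilde{\rho}$. The descent step is automatic once the lifted statement is known: $\widetilde{\rho}(\mathcal{N}_x)$ is the image of $\mathcal{N}_x$, the flow $\Phi_t\circ\Psi_{-t}$ on the quotient is covered by $\Phi_t^+\circ\Psi_{-t}^-$, and the two flows intertwine with $\widetilde{\rho}$ by construction; hence $\Phi_t^+\circ\Psi_{-t}^-(\mathcal{N}_x)\subseteq\mathcal{N}_x$ implies $(\Phi_t\circ\Psi_{-t})(\widetilde{\rho}(\mathcal{N}_x))\subseteq\widetilde{\rho}(\mathcal{N}_x)$, and applying the same inclusion with $-t$ gives equality. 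So the real content is the lifted inclusion.

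For case~(i) a typical point of $\mathcal{N}_x$ is $(c,b,\AAA,c,b,\AAA x)$ with $x.(c,b,\AAA)=(c,b,\AAA)$, i.e.\ $xcx^{-1}=c$, $xbx^{-1}=b$, $x\AAA x^{-1}=\AAA$. Applying the flow gives
\begin{equation*}
(\Phi_t^+\circ\Psi_{-t}^-)(c,b,\AAA,c,b,\AAA x)=
\bigl(c,(\zeta_t(c))^{-1}b,\AAA,c,(\zeta_t(c))^{-1}b,\AAA x\bigr).
\end{equation*}
This has exactly the shape $(c,b',\AAA,c,b',\AAA x)$ with $b'=(\zeta_t(c))^{-1}b$, so it lies in $\mathcal{N}_x$ provided $x$ still fixes the new triple $(c,b',\AAA)$ and the new tuple still lies in $\widetilde{\mathcal{R}}$. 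The second point is immediate since $\widetilde{\mathcal{R}}_\Gamma\cap\widetilde{\mathcal{R}}_{\ol{\Gamma}}$ is invariant under the flow. For the first, $c$ and $\AAA$ are unchanged so $x$ fixes them, and $xb'x^{-1}=x(\zeta_t(c))^{-1}x^{-1}\,xbx^{-1}=(\zeta_t(c))^{-1}b=b'$, where the crucial identity $x\zeta_t(c)x^{-1}=\zeta_t(c)$ follows from $xcx^{-1}=c$ together with equation~(\ref{variation_commute}) (which gives $\Ad{x}F(c)=F(c)$, hence $\Ad{x}\exp(tF(c))=\exp(tF(c))$). Cases~(ii) and~(iii) are handled the same way: in case~(ii) the point is $(c,b,\AAA,c,bx,\AAA x)$ and the flow sends it to $(c,(\zeta_t(c))^{-1}b,\AAA,c,(\zeta_t(\ol c))^{-1}bx,\AAA x)$ with $\ol c=c$, so one needs $(\zeta_t(c))^{-1}bx=\bigl((\zeta_t(c))^{-1}b\bigr)x$, which is trivially true; and again $x$ fixes the flowed triple by the same $\Ad{x}$-invariance argument. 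Case~(iii), where $\ol{\AAA}$ is replaced by $\AAA$ and the point is $(c,b,\AAA,c,bx,\AAA)$, is identical.

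The one step that deserves care — and which I expect to be the main (though modest) obstacle — is confirming that the $\Ad{x}$-invariance $x\zeta_t(c)x^{-1}=\zeta_t(c)$ really is available, i.e.\ that the condition $x.(c,b,\AAA)=(c,b,\AAA)$ does force $x$ to centralize $c$ and hence (by~(\ref{variation_commute})) to centralize $F(c)$ and $\zeta_t(c)$. This is exactly where the hypothesis defining $\mathcal{N}_x$ is used, and it is the only place the flow's ``conjugation-equivariance in the $b$-slot'' interacts nontrivially with the stabilizer condition; everything else is bookkeeping that the flow touches only the $b$ (and $\ol b$) coordinates while leaving $c,\ol c,\AAA,\ol{\AAA}$ fixed, so the defining relations of $\widetilde{\mathcal{R}}$ and the form of $\mathcal{N}_x$ are preserved essentially by inspection. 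After assembling the three cases, a one-line remark that replacing $t$ by $-t$ yields the reverse inclusion, so that $\Phi_t^+\circ\Psi_{-t}^-$ restricts to a bijection of $\mathcal{N}_x$, completes the proof; combined with Propositions~\ref{prop1} and~\ref{prop2} this gives the main theorem that the composite flow preserves both $\widetilde{\M}^\deck=\bigcup_x\widetilde{\rho}(\mathcal{N}_x)$ and $\cover(\M)=\widetilde{\rho}(\mathcal{N}_1)$.
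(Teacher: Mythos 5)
Your proof is correct and follows essentially the same route as the paper: lift to the explicit composite flow on $\widetilde{\mathcal{R}}_\Gamma\cap\widetilde{\mathcal{R}}_{\ol{\Gamma}}$, observe that only the $b$- and $\ol b$-slots move, and use the stabilizer condition $xcx^{-1}=c$ together with equation~(\ref{variation_commute}) to get $\Ad{x}\zeta_t(c)=\zeta_t(c)$ and hence that $x$ still fixes $(\zeta_t(c))^{-1}b$. The paper gives only case~(i) explicitly and notes the other cases are identical; your extra remarks on cases~(ii)--(iii), the descent to $\widetilde{\mathcal{M}}$, and the $t\mapsto -t$ bijectivity observation are all consistent with, and slightly more detailed than, the paper's proof.
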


\begin{proof}
It suffices to show that the flow $\Phi_t^+\circ\Psi_{-t}^-$ on
$\widetilde{\mathcal{R}}_\Gamma\cap
\widetilde{\mathcal{R}}_{\ol{\Gamma}}$ preserves $\mathcal{N}_x$.
We'll only give the proof for case~(i), since the proofs are nearly
identical for each of the three cases.  Suppose
$(c,b,\AAA,\ol{c},\ol{b},\ol{\AAA})=(c,b,\AAA,c,b,\AAA x)
\in\mathcal{N}_x\cap(\widetilde{\mathcal{R}}_\Gamma
\cap\widetilde{\mathcal{R}}_{\ol{\Gamma}})$, and note that
\begin{equation}
(\Phi_t^+\circ\Psi_{-t}^-)(c,b,\AAA,c,b,\AAA x)
=(c,(\zeta_t(c))^{-1}b,\AAA,c,(\zeta_t(c))^{-1}b,\AAA x).
\end{equation}
\noindent Since $x$ commutes with both $c$ and $b$,
equation~(\ref{variation_commute}) implies that $\Ad{x}F(c)=F(c)$,
\begin{equation}\begin{aligned}
& \Rightarrow \ x(\zeta_t(c))x^{-1}=\zeta_t(c) \\
& \Rightarrow \ x(\zeta_t(c))^{-1}bx^{-1}=(\zeta_t(c))^{-1}b.
\end{aligned}\end{equation}
\noindent Thus $(c,(\zeta_t(c))^{-1}b,\AAA,c,(\zeta_t(c))^{-1}b,\AAA x)
\in\mathcal{N}_x$, and the proof is complete.
\end{proof}

\begin{theorem}[Main Theorem]\label{main_theorem}
Consider the Goldman flow $\Xi_t$ on $\mathcal{M}_\gamma$ and the
composite flow $\Phi_t\circ\Psi_{-t}$ on
$\widetilde{\mathcal{M}}_\Gamma\cap
\widetilde{\mathcal{M}}_{\ol{\Gamma}}$.
\begin{itemize}
\item[(i)] $\Phi_t\circ\Psi_{-t}$ preserves $\widetilde{\M}^\deck$,
\item[(ii)] $\Phi_t\circ\Psi_{-t}$ preserves $\cover(\M)$,
\item[(iii)] $(\Phi_t\circ\Psi_{-t})\circ\deck
 =\deck\circ(\Phi_t\circ\Psi_{-t})$,
\item[(iv)] $(\Phi_t\circ\Psi_{-t})\circ\cover=\cover\circ\Xi_t$.
\end{itemize}
\end{theorem}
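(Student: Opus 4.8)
The plan is to assemble the Main Theorem from the three results that have just been established, since each of its four parts is either a verbatim restatement or a one-line consequence. Parts~(iii) and~(iv) are precisely Propositions~\ref{prop1} and~\ref{prop2}, so for those I would do nothing beyond citing them.

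For part~(i), I would begin from Proposition~\ref{proposition}, which writes the fixed point set as the union $\widetilde{\M}^\deck=\bigcup_{x\in G}\widetilde{\rho}(\mathcal{N}_x)$. Lemma~\ref{lemma} states that $\Phi_t\circ\Psi_{-t}$ carries $\widetilde{\rho}(\mathcal{N}_x)$ into itself for every $x\in G$. A map that preserves each member of a family of sets preserves their union, so $\Phi_t\circ\Psi_{-t}$ preserves $\widetilde{\M}^\deck$. The only point that needs a word of care is that the composite flow is only partially defined, on $\widetilde{\M}_\Gamma\cap\widetilde{\M}_{\ol{\Gamma}}$; here ``preserves'' should be read as: any point of $\widetilde{\M}^\deck$ that lies in this domain is sent, by the $\R$-action $\Phi_t\circ\Psi_{-t}$, to another point of $\widetilde{\M}^\deck$, and that image automatically remains in the domain because $\Phi_t\circ\Psi_{-t}$ is an $\R$-action on it.

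For part~(ii), I would observe that by Proposition~\ref{proposition} the image $\cover(\M)$ equals $\widetilde{\rho}(\mathcal{N}_1)$, so this is simply the $x=1$ instance of Lemma~\ref{lemma}. Equivalently, it can be deduced from part~(iv): since $(\Phi_t\circ\Psi_{-t})\circ\cover=\cover\circ\Xi_t$, the composite flow sends each point of $\cover(\M_\gamma)$ to a point of $\cover(\M)$; and one checks from $\COVER(c,b,\AAA)=(c,b,\AAA,c,b,\AAA)$ that $\cover(\M_\gamma)\subset\widetilde{\M}_\Gamma\cap\widetilde{\M}_{\ol{\Gamma}}$ (if $c\neq\pm1$ then both the $\Gamma$-holonomy and the $\ol{\Gamma}$-holonomy equal $c\neq\pm1$), so the composition is defined on the relevant points.

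Since every step is either a citation or a one-line deduction, I do not expect a genuine obstacle here. The only thing to get right is the bookkeeping with the partial domains of the flows together with a consistent reading of the word ``preserves'', which I have addressed above; the substantive work is contained in Propositions~\ref{prop1} and~\ref{prop2} and Lemma~\ref{lemma}.
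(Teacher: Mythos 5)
Your proposal is correct and follows the same route as the paper: parts~(iii) and~(iv) are Propositions~\ref{prop1} and~\ref{prop2}, while parts~(i) and~(ii) follow from Lemma~\ref{lemma} (and Proposition~\ref{proposition}), with part~(ii) also deducible from part~(iv). The paper likewise notes that part~(i) can alternatively be read off from part~(iii) by the standard argument that a map commuting with $\deck$ preserves its fixed point set; your extra remarks on the partial domain and the meaning of ``preserves'' are a harmless and reasonable precision.
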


\begin{proof}
We have already proved parts~(iii) and~(iv).  Part~(i) follows either
from part~(iii) or from Lemma~\ref{lemma}.  Part~(ii) follows either
from part~(iv) or from Lemma~\ref{lemma}.
\end{proof}

\begin{remark}
If the nonorientable surface $\Sigma$ is \emph{noncompact} then
Theorem~\ref{main_theorem} is still true, and the gauge theoretic
description of the Goldman flow given in Section~\ref{section1} can be
used to produce a proof.  Let $\eta(s)$ be a bump function with
support in $(0,1)$.  As in the proof of Theorem~\ref{theorem_flow}, use
$\eta(s)$ to define both the flow $\Xi_t^+$ associated to
$U\subset\Sigma$ and the flow $\Phi_t^+$ associated to
$V\subset\widetilde{\Sigma}$, and use $\eta(-s)$ to define the flow
$\Psi_t^-$ associated to $\ol{V}\subset\widetilde{\Sigma}$.  Recall
from the proof of Theorem~\ref{theorem_holonomy} that if $A$ is a flat
connection on $\Sigma$ then there are gauge transformations $\xi_t$
on $\Sigma\smallsetminus\gamma$ such that
\begin{equation}
\xi_t.(A|_{\Sigma\smallsetminus\gamma})
=(\Xi_t^+ A)|_{\Sigma\smallsetminus\gamma}.
\end{equation}
\noindent Similarly, if $A$ is a flat connection on $\widetilde{\Sigma}$
then there are gauge transformations
$\varphi_t\in\G(\widetilde{\Sigma}\smallsetminus\Gamma)$ and
$\psi_t\in\G(\widetilde{\Sigma}\smallsetminus\ol{\Gamma})$ such that
\begin{equation}\begin{aligned}
& \varphi_t.(A|_{\Sigma\smallsetminus\Gamma})
 =(\Phi_t^+ A)|_{\Sigma\smallsetminus\Gamma}, \\
& \psi_t.(A|_{\Sigma\smallsetminus\ol{\Gamma}})
 =(\Psi_t^- A)|_{\Sigma\smallsetminus\ol{\Gamma}}.
\end{aligned}\end{equation}
\noindent The gauge transformations $\xi_t$, $\varphi_t$, and $\psi_t$
can be used to prove Proposition~\ref{prop1} and
Proposition~\ref{prop2}.
\end{remark}

\end{document}